\theoremstyle{plain}
\newtheorem{theorem}{Theorem}
\newtheorem{corollary}{Corollary}
\newtheorem{proposition}{Proposition}
\newtheorem{lemma}{Lemma}
\theoremstyle{definition}
\newtheorem{definition}{Definition}
\newtheorem{remark}{Remark}
\newtheorem*{ack}{Acknowledgements}
\begin{document}

\title[Traces of random operators associated with self-affine Delone sets]
      {Traces of random operators associated with self-affine Delone sets and Shubin's formula}
\author[S. Schmieding]{Scott Schmieding}
\address{Northwestern University}
\email{schmiedi@math.northwestern.edu}
\author[R. Trevi\~{n}o]{Rodrigo Trevi\~no}
\address{University of Maryland}
\email{rodrigo@math.umd.edu}
\begin{abstract}
We study operators defined on a Hilbert space defined by a self-affine Delone set $\Lambda$ and show that the usual trace of a restriction of the operator to finite-dimensional subspaces satisfies a certain $\limsup$ law controlled by traces on a certain subalgebra. The asymptotic traces are defined through asymptotic cycles, or $\mathbb{R}^d$-invariant distributions of a dynamical system defined by $\Lambda$. We use this to refine Shubin's trace formula for certain self-adjoint operators acting on $\ell^2(\Lambda)$ and show that the errors of convergence in Shubin's formula are given by these traces.
\end{abstract}
\maketitle
\section{Introduction and statement of results}
This paper is about traces on algebras of random operators associated to dynamical systems defined by aperiodic, self-affine Delone sets. Algebras of operators close to the ones studied here have been considered before by different authors (e.g. \cite{kellendonk:noncom, BHZ:Hulls, LS:algebras}) and they are motivated in part by the study of spectral properties of Schr\"odinger operators arising in the study of quasicrystals. Since quasicrystals are modeled by Delone sets $\Lambda$, which are uniformly discrete subsets of $\mathbb{R}^d$, this leads to the study of self-adjoint operators on Hilbert spaces of the form $\ell^2(\Lambda)$ defined by $\Lambda$.

More specifically, let $A$ be a self-adjoint operator on $\ell^2(\Lambda)$ and $A|_{B_T}$ be the restriction of $A$ to the subspace $\ell^2(\Lambda\cap B_T)$, where $B_T$ is a ball of radius $T$ around the origin. For any $E>0$, define
$$n_T^A(E) = \frac{\#\{\mbox{ eigenvalues of $A|_{B_T}$ less than or equal to }E\}}{\mathrm{Vol}(B_T)}.$$
The function $E\mapsto n_T^A(E)$ is the distribution function of the measure $\rho_T^A$ defined as
$$\int\varphi\rho_T^A = \frac{\mathrm{tr}\,(\varphi(A|_{B_T}))}{\mathrm{Vol}(B_T)}$$
for $\varphi\in C_0(\mathbb{R})$. It turns out that there exists a unique trace $\tau: C^*(\Lambda)\rightarrow\mathbb{C}$ such that $\rho_T^A(\varphi)\rightarrow \rho_A(\varphi)$ in the weak topology, where the measure $\rho_A$ is defined as $\rho_A(\varphi) = \tau(\varphi(A))$ and $C^*(\Lambda)$ is a C$^*$-algebra defined by $\Lambda$ (see e.g. \cite{bellissard:gaps, LS:algebras} and references therein). The limiting distribution of the measure $\rho_A$ is called the \textbf{integrated density of states}. Shubin's trace formula
\begin{equation}
\label{eqn:Shubin}
 \lim_{T\rightarrow \infty}\frac{1}{\mathrm{Vol}(B_T)}\mathrm{tr}\,(\varphi(A|_{B_T})) = \tau(\varphi(A)),
\end{equation}
which asserts that the trace per unit volume gives the integrated density of states, gives information about the structure of the spectrum $\sigma(A)$. Shubin's formula was first proved for pseudodifferential operators with almost periodic coefficients and it was extended by Bellissard to cases covering Delone sets \cite{bellissard:Ktheory}, which is the setting we consider here.

The goal of this paper is to refine the convergence in (\ref{eqn:Shubin}). For example, we want to know whether there is an asymptotic statement that can be made for the difference
$$\mathrm{tr}\,(\varphi(A|_{B_T})) - \rho_A(\varphi) \mathrm{Vol}\,(B_T)$$
as $T\rightarrow \infty$.
In this case, any statement would yield information on the error rates of the integrated density of states given by self-adjoint operators $A:\ell^2(\Lambda)\rightarrow \ell^2(\Lambda)$. By applying the results of \cite{ST:SA}, which gives rates of converge of ergodic averages of systems related to self-affine sets, in this paper, we will show that as long as the quasicrystal is self-affine, the convergence in Shubin's formula (\ref{eqn:Shubin}) can be refined since deep down it can be obtained through an ergodic average.

More precisely, the route to this refinement is the study of deviations of ergodic averages of uniquely ergodic $\mathbb{R}^d$-actions on a compact metric space $\Omega_\Lambda$ defined by a self-affine Delone set $\Lambda$, where the action consists of translating the set $\Lambda$. The types of Delone sets which are studied are called \textbf{renormalizable of finite type} (RFT Delone sets), which are defined in \S \ref{subsec:affinity}. Examples of such Delone sets are given by aperiodic substitution tilings such as the Penrose tilings and the Ammann-Beenker tilings in two dimensions, and the icosahedral tilings in three dimensions. There are also examples coming from the cut and project construction which are not immediately given by substitution tilings.

We briefly summarize the setup from \cite{ST:SA} relevant in order to state the main results of this paper (see \S \ref{sec:TracesCycles} for details). Given any RFT Delone set $\Lambda$, there exist $d_\Lambda\geq 1$ numbers $\nu_1>|\nu_2|\geq \cdots \geq |\nu_{d_\Lambda}| > 1$ which are the eigenvalues of an induced map on the cohomology space $H^d(\Omega_\Lambda;\mathbb{R})$ (this is defined in \S\ref{sec:cohomology}). Along with these numbers are cohomology classes $[\eta_{i,j,k}]$ which are generalized eigenvectors of the induced action on cohomology and are represented by $d$-forms $\eta_{i,j,k}$ on $\mathbb{R}^d$. These classes are indexed by a set $I_\Lambda^+$ which is finite by the RFT assumption. There are dual currents $\mathfrak{C}_{i,j,k}$ such that, roughly speaking, if $\mathfrak{C}_{i,j,k}(\eta)\neq 0 $ then $\left|\int_{B_T}\eta\right|$ is at least of the order of $T^{d s_i}$, where $s_i = \frac{\log |\nu_i|}{\log |\nu_1|}$. The set $I_\Lambda^+$ is partially ordered: $(i,j,k)\leq (i',j',k')$ if $L(i,j,T)T^{ds_i}\geq L(i',j',T)T^{ds_{i'}}$ for any (and all) $T>1$, where $L(i,j,T)$ are powers of $\log T$ defined in (\ref{eqn:currents1}) and (\ref{eqn:currents2}) (the order does not depend on the indices $k$).

While the leading current $\mathfrak{C}_{1,1,1}$ represents the unique $\mathbb{R}^d$-invariant measure for the translation action on $\Omega_\Lambda$, the other currents capture a dynamical invariant for smooth functions. From the invariant currents $\mathfrak{C}_{i,j,k}$ we obtain traces $\tau_{i,j,k}$ on a certain $*$-algebra $\mathcal{A}_\Lambda^{tlc}\subset C^*(\Lambda)$ of bounded operators on $\ell^2(\Lambda)$ called $\Lambda$\textbf{-equivariant operators of finite range} (see Proposition \ref{prop:traces}). This $*$-algebra of operators contain many operators of interest to physics, such as Hamiltonian operators of the form $H = -\triangle + V$, for $\Lambda$-equivariant potentials $V$. As such, the trace $\tau_{1,1,1}$ is the trace $\tau$ from Shubin's trace formula (\ref{eqn:Shubin}). In what follows, given a bounded set $B_0\subset \mathbb{R}^d$ with a regular boundary (a \emph{good Lipschitz domain} as defined in \cite[\S 5]{ST:SA}), $B_T$ is a one-parameter family of sets obtained from $B_0$ by multiplying by a one parameter family of matrices $g_T$ in such a way that $\mathrm{Vol}(B_T) = \mathrm{Vol}(B_0)T^d$. We now give our refinement to Shubin's formula.
\begin{theorem}
  \label{thm:shubin}
  Let $\Lambda$ be an RFT Delone set, $B_0$ a good Lipschitz domain and $A\in\mathcal{A}_\Lambda^{tlc}$ a self-adjoint operator. For any index $(i,j,k)$ there exists a regular countably additive Borel measure $\rho_{i,j,k}^A$ such that for any polynomial $\varphi\in C(\mathbb{R})$,
\begin{equation}
  \label{eqn:shubin2}
  \begin{split}
    \limsup_{T\rightarrow \infty}& \frac{1}{L(i,j,T)T^{d \frac{\log |\nu_i|}{\log \nu_1}}}\left(  \mathrm{tr}(\varphi(A|_{B_T}))  -   \sum_{\substack{(i',j',k')\leq (i,j,k) \\ k'\neq k  }} \rho_{i',j',k'}^A(\varphi) \Psi_{i',j',k'}^{B_0}(T)L(i',j',T)T^{d \frac{\log |\nu_{i'}|}{\log \nu_1}}  \right) \\
    &= \rho^A_{i,j,k}(\varphi),
    \end{split}
\end{equation}
where $L(i,j,T)$ is a non-negative integer power of $\log T$ and  $\Psi_{i,j,k}^{B_0}:\mathbb{R}^+\rightarrow \mathbb{R}$ is a continuous bounded function satisfying $\limsup_{T\rightarrow\infty} \Psi_{i,j,k}^{B_0}(T) = 1 $. The measures are defined by $\rho_{i,j,k}^A(\varphi) = \tau_{i,j,k}(\varphi(A))$, where $\tau_{i,j,k}$ are traces on $\mathcal{A}^{tlc}_\Lambda$.
\end{theorem}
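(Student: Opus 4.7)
The plan is to reduce the statement to a deviation estimate for ergodic integrals of closed $d$-forms on $\Omega_\Lambda$ over the family $B_T$, then to apply the main deviation theorem of \cite{ST:SA} inductively along the partial order on $I_\Lambda^+$. By linearity it suffices to treat monomials $\varphi(x)=x^n$. Since $A\in\mathcal{A}_\Lambda^{tlc}$ and $\mathcal{A}_\Lambda^{tlc}$ is a $*$-algebra of finite-range operators, $A^n\in\mathcal{A}_\Lambda^{tlc}$ and has propagation at most $nR$, where $R$ is the range of $A$. The asserted measures $\rho_{i,j,k}^A(\varphi)=\tau_{i,j,k}(\varphi(A))$ are well-defined because the polynomial functional calculus of the self-adjoint element $A$ lies in $\mathcal{A}_\Lambda^{tlc}$; regularity and countable additivity follow, as in the $(1,1,1)$ case for the integrated density of states, by writing $\tau_{i,j,k}$ as a positive linear functional on a space of continuous functions and invoking Riesz representation.

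The key reduction is the matrix-element identity
\[
\langle \delta_\lambda,(A|_{B_T})^n\delta_\lambda\rangle = \langle \delta_\lambda,A^n\delta_\lambda\rangle,
\]
which holds whenever $\lambda\in\Lambda\cap B_T$ lies at distance greater than $nR$ from $\partial B_T$, since every length-$n$ path in the support of the kernel of $A$ based at $\lambda$ then remains inside $B_T$. The function $F_n(\lambda):=\langle \delta_\lambda,A^n\delta_\lambda\rangle$ is $\Lambda$-equivariant of finite range, hence descends to a transversally locally constant function on $\Omega_\Lambda$ which, together with a $\Lambda$-equivariant partition of unity, builds a closed $d$-form $\eta_{A,n}$ on $\Omega_\Lambda$. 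Up to the boundary correction coming from the collar $\{x\in B_T:\mathrm{dist}(x,\partial B_T)\leq nR\}$, one then has $\mathrm{tr}\,((A|_{B_T})^n)=\int_{B_T}\eta_{A,n}$.

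Next I would apply the main deviation theorem of \cite{ST:SA} to $\eta_{A,n}$. That theorem expands $\int_{B_T}\eta_{A,n}$ asymptotically as a sum
\[
\sum_{(i',j',k')\leq(i,j,k)}\mathfrak{C}_{i',j',k'}(\eta_{A,n})\,\Psi_{i',j',k'}^{B_0}(T)\,L(i',j',T)\,T^{ds_{i'}}
\]
plus a remainder whose $\limsup$, after division by $L(i,j,T)T^{ds_i}$, equals $\mathfrak{C}_{i,j,k}(\eta_{A,n})$. By Proposition \ref{prop:traces}, the pairing $\mathfrak{C}_{i',j',k'}(\eta_{A,n})$ equals $\tau_{i',j',k'}(A^n)=\rho_{i',j',k'}^A(\varphi)$, so matching terms with the subtracted sum in (\ref{eqn:shubin2}) yields the claimed $\limsup$ identity. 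The partial-order structure of the statement reflects precisely the inductive nature of the expansion: at each level one has already removed the contributions of strictly larger growth.

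The main obstacle is the boundary correction $\mathrm{tr}\,((A|_{B_T})^n)-\int_{B_T}\eta_{A,n}$, supported on a collar of thickness $O(1)$ around $\partial B_T$ and hence of size $O(T^{d-1})$. Since $s_i$ can be arbitrarily close to $0$, this $O(T^{d-1})$ contribution cannot in general be absorbed into the $T^{ds_i}$ scale. The resolution exploits the assumption that $B_0$ is a good Lipschitz domain: the collar integral itself admits a cohomological deviation expansion in terms of the same currents $\mathfrak{C}_{i,j,k}$ paired against codimension-one data determined by $\partial B_0$, and these deviations are exactly what define the continuous bounded functions $\Psi_{i,j,k}^{B_0}(T)$ with $\limsup_{T\to\infty}\Psi_{i,j,k}^{B_0}(T)=1$. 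Once $\Psi_{i,j,k}^{B_0}$ is identified in this way, the theorem follows by a finite induction along the partial order on the finite set $I_\Lambda^+$.
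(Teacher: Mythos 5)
Your overall architecture coincides with the paper's: compare $\mathrm{tr}(\varphi(A|_{B_T}))$ with $\mathrm{tr}(\varphi(A)|_{B_T})$ by a finite-propagation argument (your matrix-element identity is in substance the estimate (\ref{eqn:inOut}) quoted from \cite{LS:algebras}), turn the diagonal of the kernel of $\varphi(A)$ into a $\Lambda$-equivariant $d$-form via a bump function, and feed that form into the deviation expansion of \cite{ST:SA}. However, your final paragraph, which is where you claim to produce the functions $\Psi_{i,j,k}^{B_0}$, goes wrong on two counts. First, the boundary collar is not the obstacle you make it out to be: the theorem is only asserted for indices in $I_\Lambda^+$, and the defining inequality (\ref{eqn:RES}) of the rapidly expanding subspace says precisely that $d\frac{\log|\nu_i|}{\log\nu_1}\geq d\bigl(1-\frac{\log|\lambda_d|}{\log\nu_1}\bigr)$, the right-hand side being the growth exponent of $|\partial B_T|$; so the $\mathcal{O}(|\partial B_T|)$ collar term is annihilated by the normalization $L(i,j,T)T^{d\log|\nu_i|/\log\nu_1}$, with the extra power of $\log T$ in $L$ handling the equality case $I_\Lambda^{+,=}$. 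Your worry that ``$s_i$ can be arbitrarily close to $0$'' ignores this restriction. Second, and consequently, the functions $\Psi_{i,j,k}^{B_0}$ do not arise from any ``cohomological deviation expansion of the collar integral'': they are defined in (\ref{eqn:PsiFuns}) as the normalized ergodic integrals of the basis forms $\eta_{i,j,k}$ themselves, and they are needed because the subtracted terms in (\ref{eqn:shubin2}) must carry the actual finite-$T$ values of $\int_{B_T}\eta_{i',j',k'}$ rather than their leading asymptotics; the property $\limsup_{T\to\infty}\Psi_{i,j,k}^{B_0}(T)=1$ is just the normalization (\ref{eqn:scaleForm}) of the $\eta_{i,j,k}$. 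As written, your construction would not produce the objects the theorem refers to, so this step is a genuine gap.

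Two smaller points. The opening reduction ``by linearity it suffices to treat monomials'' is not legitimate for a $\limsup$ identity, since $\limsup$ is not additive; you must carry the whole polynomial through to a single application of (\ref{eqn:currents1})--(\ref{eqn:currents2}) at the end (the pre-limit identities are linear in $\varphi$, so this is a repair, not a rewrite). And the measures $\rho_{i,j,k}^A$ for $(i,j,k)\neq(1,1,1)$ are not obtained from positive functionals --- the example in \S\ref{subsec:examples} has $\tau_2(H_0)<0$ --- so the representation theorem must be invoked for bounded, not positive, functionals on $C(\sigma(A))$, after extending $\varphi\mapsto\tau_{i,j,k}(\varphi(A))$ by continuity as in Proposition \ref{prop:injectiveMap}.
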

The functions $\Psi_{i,j,k}^{B_0}(T)$ describe the oscilations which inevitably happen as one integrates functions over the sets $B_T$. This will become more clear in the proof of the theorem.

Theorem \ref{thm:shubin} gives rates of convergence to Shubin's formula (\ref{eqn:Shubin}) in the case $\varphi$ is a polynomial, that is, errors of the integrated density of states: the distributions of the measures $\rho_{i,j,k}^A$ capture the error in the convergence to the integrated density of states. We do not know what physical interpretation the measures $\rho_{i,j,k}^A$ or their distributions may have. We do not know, at the moment, how to extend the $\limsup$ statement (\ref{eqn:shubin2}) in Theorem \ref{thm:shubin} to all continuous functions $\varphi \in C(\mathbb{R})$ (see Remark \ref{remark:allcontinuous}), although the functionals $\rho_{i,j,k}^A$ are measures. A more general theorem on traces on the $*$-algebra of operators $\mathcal{A}_\Lambda^{tlc}$ can also be derived from the results of \cite{ST:SA}.
\begin{theorem}
  \label{thm:main}
  Let $\Lambda$ be an RFT Delone set and $B_0$ a good Lipschitz domain. For every index $(i,j,k)\in I^+_\Lambda$ there exists a trace $\tau_{i,j,k}:\mathcal{A}_\Lambda^{tlc}\rightarrow \mathbb{C}$ such that for any $A\in\mathcal{A}_\Lambda^{tlc}$
  \begin{equation}
    \label{eqn:main}
    \begin{split}
      \limsup_{T\rightarrow \infty}&\frac{1}{L(i,j,T)T^{d \frac{\log |\nu_i|}{\log |\nu_1|}}}\left(  \mathrm{tr}(A|_{B_T})  -  \sum_{\substack{(i',j',k')\leq (i,j,k) \\ k'\neq k  }} \tau_{i',j',k'}(A) \Psi_{i',j',k'}^{B_0}(T) L(i',j',T)T^{d \frac{\log |\nu_{i'}|}{\log \nu_1}} \right) \\
      &= \tau_{i,j,k}(A),
    \end{split}
  \end{equation}
  where $L(i,j,T)$ is a non-negative power of $\log T$ and $\Psi_{i,j,k}^{B_0}:\mathbb{R}^+\rightarrow \mathbb{R}$ is  a continuous bounded function satisfying $\limsup_{T\rightarrow\infty} \Psi_{i,j,k}^{B_0}(T) = 1$.
\end{theorem}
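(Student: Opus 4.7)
The plan is to reduce $\mathrm{tr}(A|_{B_T})$ to the integral of a pattern-equivariant $d$-form over $B_T$, and then apply the deviation results for such integrals from \cite{ST:SA}. For $A \in \mathcal{A}^{tlc}_\Lambda$, the diagonal matrix element $a(x) := \langle A\delta_x, \delta_x\rangle$ with $x \in \Lambda$ is pattern-equivariant of finite range, so it depends only on the patch of $\Lambda$ in a bounded neighbourhood of $x$. Consequently the atomic $d$-current $\sum_{x \in \Lambda} a(x)\,\delta_x\, dx_1 \wedge \cdots \wedge dx_d$ is the pull-back of a transversally locally constant top-degree object on $\Omega_\Lambda$; after a fixed smooth convolution it is represented by a pattern-equivariant $d$-form $\eta_A$ whose cohomology class $[\eta_A] \in H^d(\Omega_\Lambda;\mathbb{R})$ does not depend on the choice of smoothing. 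One then has
\[
\mathrm{tr}(A|_{B_T}) \;=\; \sum_{x \in \Lambda \cap B_T} a(x) \;=\; \int_{B_T} \eta_A \;+\; E_A(T),
\]
where $E_A(T)$ is a boundary term controlled via the good Lipschitz domain property of $B_0$, the finite range of $A$, and the uniform discreteness of $\Lambda$.

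Next I would decompose $[\eta_A]$ in the generalized eigenbasis for the renormalization action on $H^d(\Omega_\Lambda;\mathbb{R})$ and set $\tau_{i,j,k}(A) := \mathfrak{C}_{i,j,k}(\eta_A)$. Applying the deviation theorem of \cite{ST:SA} to the pattern-equivariant form $\eta_A$ then produces an asymptotic expansion of $\int_{B_T}\eta_A$ in which the contribution at index $(i',j',k')$ is precisely $\tau_{i',j',k'}(A)\Psi^{B_0}_{i',j',k'}(T)L(i',j',T)T^{d s_{i'}}$, and the $\limsup$ of the residual after removing all terms with $(i',j',k') > (i,j,k)$, normalized by $L(i,j,T)T^{d s_i}$, equals $\tau_{i,j,k}(A)$. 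Absorbing $E_A(T)$ into the lower-order indices in the expansion yields the identity (\ref{eqn:main}).

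Linearity of $\tau_{i,j,k}$ is immediate from linearity of $A \mapsto \eta_A$ and of $\mathfrak{C}_{i,j,k}$. For the trace property I would verify that for $A, B \in \mathcal{A}^{tlc}_\Lambda$ the difference of diagonals $(AB)(x,x) - (BA)(x,x) = \sum_y\!\bigl(a(x,y)b(y,x) - b(x,y)a(y,x)\bigr)$ yields a pattern-equivariant $d$-form that is cohomologically trivial in $H^d(\Omega_\Lambda;\mathbb{R})$, by a finite-range telescoping argument enabled by the finite range of $A$ and $B$. Since $\mathfrak{C}_{i,j,k}$ is a cohomological current, it annihilates this form, so $\tau_{i,j,k}(AB) = \tau_{i,j,k}(BA)$; this is the mechanism that at $(i,j,k)=(1,1,1)$ recovers the canonical trace $\tau$ of Shubin's formula and that Proposition \ref{prop:traces} records for every index.

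The principal obstacle will be the boundary-error control. Because $B_T = g_T B_0$ is stretched anisotropically by the RFT matrix family, $|\partial B_T|$ need not be of smaller order than $L(d_\Lambda,j,T)T^{d s_{d_\Lambda}}$, so a naive perimeter bound on $E_A(T)$ does not automatically absorb into the admissible error at the smallest scale $(d_\Lambda,j,k)$. Overcoming this requires combining the good-Lipschitz-domain hypothesis on $B_0$ with the eigenvalue structure of $g_T$ and the uniform equidistribution statements of \cite{ST:SA} for boundary tubes of such domains, so that $E_A(T)$ is absorbed at every index in the expansion and the $\limsup$ statement (\ref{eqn:main}) remains meaningful throughout the entire partially ordered set $I^+_\Lambda$.
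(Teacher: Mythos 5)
Your proposal follows essentially the same route as the paper: smooth the diagonal $A(p,p)$ against a bump function to get a $\Lambda$-equivariant $d$-form, show $\mathrm{tr}(A|_{B_T})$ differs from $\int_{B_T} f_A$ by an $\mathcal{O}(|\partial B_T|)$ boundary term, define $\tau_{i,j,k} = \mathfrak{C}_{i,j,k}\circ w$, and prove the trace property by the finite-range cancellation of $f_{AB-BA}$ up to boundary terms (the paper's Proposition \ref{prop:traces}). The ``principal obstacle'' you flag is already resolved by construction: the inequality (\ref{eqn:RES}) defining $I^+_\Lambda$ guarantees $L(i,j,T)T^{ds_i}$ dominates $|\partial B_T| \sim T^{d(1-\log|\lambda_d|/\log\nu_1)}$ for every admissible index (with the extra $\log T$ factor handling the equality case $I^{+,=}_\Lambda$), so no further equidistribution argument for boundary tubes is needed.
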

It is important to point out that the traces $\tau_{i,j,k}$, defined on a dense $*$-subalgebra $\mathcal{A}_\Lambda^{tlc}$ of the C$^*$-algebra $C^*(\Lambda)$, \textbf{do not extend to the full C$^*$-algebra}, with the exception of $\tau_{1,1,1}$, which comes from the unique $\mathbb{R}^d$-invariant measure for the translation action on $\Omega_\Lambda$. This is due to the fact that the currents $\mathfrak{C}_{i,j,k}$, from which the traces are defined, are functionals defined only on forms possessing sufficient regularity, not just continuous forms. Thus, the subalgebra $\mathcal{A}^{tlc}_\Lambda$ corresponds to the operators in $C^*(\Lambda)$ which are ``smooth'' in some sense \footnote{We thank I. Putnam for pointing this out to us.}. The results above relate traces of self-adjoint operators to more than one homology class (closed cycles) of $\mathrm{Hom}(H^d(\Omega_\Lambda;\mathbb{R});\mathbb{R})$ and they echo the spirit of Connes's cyclic homology.

In recent years there has been a string of results for one-dimensional self-similar tilings, amongst which the Fibonacci Hamiltonian, defined by the Fibonacci substitution, has been thoroughly studied (see \cite{DEG:survey} for a comprehensive survey of the results). Many of the results on the spectral properties of Schr\"odinger operators concern ones coming from Sturmian substitutions. It is known that in such case, $\mathrm{dim}\,H^1(\Omega_\Lambda;\mathbb{R}) = 2$ and that $|I_\Lambda^+| = 1$, so our theorems do not say anything about Shubin's formula in such contexts; there is no refinement in those cases. Since the cohomology of tilings and Delone sets capture some sort of complexity, the Sturmian substitutions are not complex enough to have non-trivial limits in (\ref{eqn:shubin2}) and (\ref{eqn:main}). However, it is easy to construct substitutions on more than 2 symbols for which there are expansions for our main theorems (see \S \ref{subsec:examples}). For higher dimensions, operators coming from well-known substitution tilings such as the Penrose or Ammann-Beenker tilings do admit a refinement since $|I_\Lambda^+| = 3$ in those cases.

This paper is organized as follows. In section \ref{sec:delone} we recall Delone sets, pattern spaces and the translation action on pattern spaces. In section \ref{sec:cohomology} we go over the necessary definitions of cohomology for pattern spaces as well as RFT Delone sets. In section \ref{sec:ops} we recall the algebras of operators with which we will work and relate them to the cohomology spaces defined in the previous section. In section \ref{sec:TracesCycles} we recall the relevant results of \cite{ST:SA}, show that we can define traces from $\mathbb{R}^d$-invariant distributions on the pattern space from \cite{ST:SA}, and prove the main theorems. In the last section we work out an explicit example in one dimension.
\begin{ack}
We would like to thank J. Kellendonk for discussing questions related to Shubin's formula and I. Putnam for discussing traces on $*$-algebras. Part of R.T.'s travel funding for this project came from a AMS-Simons Travel Grant.
\end{ack}
\section{Delone sets}
\label{sec:delone}
A subset $\Lambda\subset \mathbb{R}^d$ is a \textbf{Delone set} if it satisfies two conditions:
\begin{enumerate}
\item \textbf{Uniformly discrete:} There exists a $r>0$ such that any distinct two points $x,y\in\Lambda$ are separated by a distance of at least $r$;
\item \textbf{Relatively dense:} There exists an $R>0$ such that for any point $x\in\mathbb{R}^d$, the ball of radius $R$ centered at $x$ contains at least one other point of $\Lambda$.
\end{enumerate}
The radius $r$ involved in uniform discreteness is called the \textbf{packing radius}; the radius $R$ involved in relative density is called the \textbf{covering radius}. A \textbf{cluster} is a finite subset of $\Lambda$. A Delone set has \textbf{finite local complexity} if for any given $R>0$, the set of all clusters found in any ball of radius $R$, up to translation, is finite.

For any Delone set $\Lambda$, we denote by $\varphi_t(\Lambda)$ the translation of the set $\Lambda$ by the vector $t\in\mathbb{R}^d$. For any two translates $\Lambda,\Lambda'$ of $\Lambda$, we define the distance between them by
$$d(\Lambda,\Lambda') = \inf\{\varepsilon>0: B_{\varepsilon^{-1}}(0)\cap \varphi_x(\Lambda) = B_{\varepsilon^{-1}}(0)\cap \varphi_y(\Lambda') \mbox{ for some }x,y\in B_\varepsilon(0) \}.$$
The \textbf{pattern space of $\Lambda$} is the closure of the set of translates of $\Lambda$ with respect to the above metric:
$$\Omega_\Lambda = \overline{\{\varphi_t(\Lambda): t\in\mathbb{R}^d\}}.$$
The \textbf{canonical transversal} of the pattern space $\Omega_\Lambda$ is the set
$$\mho_\Lambda = \{\Lambda'\in\Omega_\Lambda : 0\in\Lambda'\}.$$
If $\Lambda$ has finite local complexity, the canonical transversal $\mho_\Lambda$ is a Cantor set (i.e. perfect and totally disconnected) and the pattern space $\Omega_\Lambda$ is compact. In that case, the topology induced by the metric on $\mho_\Lambda$ is generated by clopen sets given by specifying clusters of $\Lambda$. That is, for any given cluster $C\subset \Lambda$ and a point $p\in C$, the set $U_C\subset \mho_\Lambda$ given by all patterns $\Lambda'\in \Omega_\Lambda$ with a cluster equivalent to $C$ around the origin (and $p$ identified to the origin) is a clopen set in the topology.

Thus, when $\Lambda$ has finite local complexity, the pattern space $\Omega_\Lambda$ has a local structure modeled on sets of the form $B_\varepsilon \times \mathcal{C}$ where $B_\varepsilon\subset \mathbb{R}^d$ is an open ball and $\mathcal{C}$ is a Cantor set. The pattern space is a foliated space where the leaves of the foliation are orbits of the $\mathbb{R}^d$ action.

The only types of Delone sets which will be considered here are those which give rise to a uniquely ergodic $\mathbb{R}^d$-action on $\Omega_\Lambda$, that is, sets for which there is a unique $\mathbb{R}^d$-invariant probability measure $\mu$ on $\Omega_\Lambda$ which is invariant under the translation action $\varphi_t$ of $\mathbb{R}^d$. Moreover, for any $f\in C(\Omega_\Lambda)$ and sequence $\{B_T\}$ of balls of radius $T$ centered at the origin,
$$\frac{1}{\mathrm{Vol}(B_T)}\int_{B_T} f\circ \varphi_t(\Lambda_0)\, dt \longrightarrow \int_{\Omega_\Lambda} f(\Lambda')\, d\mu(\Lambda')$$
uniformly for any $\Lambda_0$\footnote{The convergence for unique ergodicity holds more generally for \emph{F\o lner sequences} $\{B_T\}$ of sets, which are good sequences of averaging sets. See \cite[\S 8.4]{EW:book}.}. By the local product structure of $\Omega_\Lambda$, the invariant measure has a local product structure of the form $\mu = \mathrm{Leb}\times \mathfrak{m}_\Lambda$, where $\mathfrak{m}_\Lambda$ is a measure on $\mho_\Lambda$. Given a cluster $C\subset \Lambda$, the measure $\mathfrak{m}_\Lambda(U_C)$ is the asymptotic frequency of the cluster $C$ in $\Lambda$ \cite{LMS02} (this measure will be seen in the examples of \S \ref{subsec:examples}). These systems are always minimal in the sense that every leaf of the foliation of $\Omega_\Lambda$ is dense in $\Omega_\Lambda$.
\section{Cohomology}
\label{sec:cohomology}
Let $\Lambda\subset \mathbb{R}^d$ be a Delone set.
\begin{definition}
\label{def:PE}
A continuous function $f:\mathbb{R}^d\rightarrow \mathbb{R}$ is \textbf{$\Lambda$-equivariant} \cite{Kellendonk:PEC} if there exists an $R_f>0$ such that
$$B_{R_f}(0)\cap \varphi_x(\Lambda) = B_{R_f}(0)\cap \varphi_y(\Lambda) \mbox{ implies }f(x)=f(y).$$
\end{definition}
Differential forms which are $\Lambda$-equivariant are defined as differential forms for which the coefficients are $\Lambda$-equivariant functions. We denote by $\Delta_\Lambda^k$ the set of all $C^\infty$ $k$-forms which are $\Lambda$-equivariant. The complex $0\rightarrow \Delta_\Lambda^0\rightarrow \Delta_\Lambda^1\rightarrow \cdots \rightarrow \Delta_\Lambda^d\rightarrow 0$ is a subcomplex of the de Rham complex.
\begin{definition}
  \label{def:coh}
The $k^{th}$ $\Lambda$-equivariant cohomology spaces are defined by
$$H^k(\Omega_\Lambda;\mathbb{R}) = \frac{\mathrm{ker}\, \{d:\Delta_\Lambda^k\rightarrow \Delta_\Lambda^{k+1}\}}{\mathrm{Im}\, \{d:\Delta_\Lambda^{k-1}\rightarrow \Delta_\Lambda^{k}\}}.$$

The set $C^{\infty}_{tlc}(\Omega_\Lambda)$ is the set of \textbf{transversally locally constant functions}, that is, the set of continuous functions on $\Omega_\Lambda$ which are ($C^\infty$) smooth in the leaf direction of the foliation and locally constant in the transverse direction. For any such function, for any $\Lambda'$ there exists an $R_{\Lambda'}>0$ such that if for any $\Lambda''$ with $\Lambda'\cap B_{R_{\Lambda'}}(0) = \Lambda''\cap B_{R_{\Lambda'}}(0)$, then $f(\Lambda') = f(\Lambda'')$. For any $\Lambda'\in\Omega_{\Lambda}$, by \cite{Kellendonk-Putnam:RS} there is an algebra isomorphism $i_{\Lambda'}:C^{\infty}_{tlc}(\Omega_\Lambda) \rightarrow \Delta_\Lambda^0$ given, for any $h \in C^\infty_{tlc}(\Omega_\Lambda)$, by
\begin{equation}
\label{eqn:AlgIso}
f_h (t) := i_{\Lambda'}(h)(t) = h\circ \varphi_t(\Lambda').
\end{equation}
By the local product structure, any $h\in C^\infty_{tlc}(\Omega_\Lambda)$ defines a locally constant function on the canonical transversal $g_h:\mho_\Lambda\rightarrow \mathbb{R}$. We denote the set of locally constant functions on $\mho_\Lambda$ by $C^\infty(\mho_\Lambda)$.

To any $\Lambda$-equivariant function (and by the isomorphism above, to any function in $C^\infty_{tlc}(\Omega_\Lambda)$) we can assign a unique cohomology class in $H^d(\Omega_\Lambda;\mathbb{R})$. This is done as follows: let $f\in\Delta_\Lambda^0$ and denote by $(\star 1) = dx_1\wedge\cdots \wedge dx_d$ the smooth volume form in $\mathbb{R}^d$. Then $f(\star 1)$  is a closed form in $\Delta_\Lambda^d$ and therefore it has a cohomology class $[f(\star 1)]$. The \textbf{cohomology class of $f$} is defined to be $[f(\star 1)]\in H^d(\Omega_\Lambda;\mathbb{R}^d)$.
\end{definition}

Given a locally constant function $g\in C^\infty(\mho_\Lambda)$ we can extend it to a transversally locally constant function $h\in C_{tlc}^{\infty}(\Omega_\Lambda)$ by ``smoothing'' the function along the foliation direction in $\Omega_\Lambda$. By the isomorphism (\ref{eqn:AlgIso}), this extension defines a $\Lambda$-equivariant function $f_g\in\Delta_\Lambda^0$. As such, we can define the cohomology class of $g\in C^\infty(\mho_\Lambda)$ to be the cohomology class $[f_g]$ of $f_g$. The resulting cohomology class is independent of the bump function used to smooth out $g$ as long as it is supported on a small enough compact set and has integral one. Thus, characteristic functions $\chi_{U}\in C^\infty(\mho_\Lambda)$ of clopen sets $U\subset \mho_\Lambda$ have cohomology classes and linear functionals on $C^\infty(\mho_\Lambda)$ can be identified with linear functionals on $H^d(\Omega_\Lambda;\mathbb{R})$, that is, with homology classes.
\subsection{Self-affinity}
\label{subsec:affinity}
We recall some definitions from \cite{ST:SA}. Let $\Lambda\subset \mathbb{R}^d$ be a Delone set for which the $\mathbb{R}^d$ action on $\Omega_\Lambda$ is uniquely ergodic and denote by $\mu$ the unique $\mathbb{R}^d$-invariant measure. In that case, $\Lambda$ is \textbf{renormalizable of finite type} (RFT) if
\begin{enumerate}
\item There exists an expanding matrix $M_\Lambda \in GL^+(d,\mathbb{R})$ and a $\mu$-preserving homeomorphism $\Phi_\Lambda:\Omega_\Lambda\rightarrow \Omega_\Lambda$ satisfying the conjugacy
\begin{equation}
\label{eqn:conjugacy}
\Phi_\Lambda \circ \varphi_t = \varphi_{M_\Lambda t}\circ \Phi_\Lambda
\end{equation}
for any $t\in\mathbb{R}^d$. By expanding, we mean that $M_\Lambda$ has all eigenvalues, in modulus, greater than one.
\item The spaces $H^*(\Omega_\Lambda;\mathbb{R})$ are finite dimensional.
\end{enumerate}
\begin{remark}
There are plenty RFT Delone sets. For example, substitution tilings, by \cite{AP}, have finite dimensional cohomology spaces. Moreover, the substitution action induces the $\mu$-preserving homeomorphism of $\Omega_\Lambda$. Cut and project Delone sets can also be constructed to be RFT \cite[\S 8]{ST:SA}.
\end{remark}
For a RFT Delone set $\Lambda$ there is an induced action $\Phi_\Lambda^*: H^d(\Omega_\Lambda;\mathbb{R}) \rightarrow H^d(\Omega_\Lambda;\mathbb{R})$. Since $H^*(\Omega_\Lambda;\mathbb{R})$ is finite dimensional, we list the finite set of eigenvalues in decreasing order by norm: $\nu_1 > |\nu_2|\geq \cdots \geq |\nu_{d_\Lambda}|$, where $d_\Lambda = \mathrm{dim} H^d(\Omega_\Lambda;\mathbb{R})$. The spectral gap $\nu_1 > |\nu_2|$ is a consequence of the self-affine property and, moreover, we have that $\nu_1 = \mathrm{det}(M_\Lambda)$ \cite[\S 5.3.1]{ST:SA}. We also list the eigenvalues of $M_\Lambda$ by norm: $|\lambda_1|\geq \cdots \geq |\lambda_d| >1$.

By \cite[Lemma 1]{ST:SA}, the induced action by $\Phi^*_\Lambda$ is defined as follows. Let $\Lambda$ be a RFT Delone set and $[\eta]$ a class in $H^d(\Omega_\Lambda;\mathbb{R})$ represented by the $\Lambda$-equivariant $d$-form $\eta$. Denote by $M_\Lambda$ the expanding matrix associated to $\Lambda$. Then $\Phi^*_\Lambda[\eta]$ is represented by the form $M_\Lambda^*\eta$, where by $M^*_\Lambda$ we denote the pull-back by the linear map $M^*_\Lambda$.

Let $E_i$ be the generalized eigenspaces for the action of $\Phi_{\Lambda}^*$ on $H^d(\Omega_{\Lambda};\mathbb{R})$ induced by the map by $\Phi_\Lambda$ corresponding to the eigenvalue $\nu_i$. The subspaces $E_i$ are decomposed as
$$E_i = \bigoplus_{j=1}^{\kappa(i)} E_{i,j},$$
where $\kappa(i)$ is the size of the largest Jordan block associated with $\nu_i$. For each $i$, we choose a basis of classes $\{[\eta_{i,j,k}]\}$ with the property that $\langle [\eta_{i,j,1}],[\eta_{i,j,2}],\dots, [\eta_{i,j,s(i,j)}]\rangle = E_{i,j}$ and
\begin{equation}
  \label{eqn:basisAct}
  \Phi_\Lambda^* [\eta_{i,j,k}] = \left\{\begin{array}{ll}
\nu_i [\eta_{i,j,k}] + [\eta_{i,j-1,k}]   &\mbox{ for $j>1$,} \\
\nu_i [\eta_{i,j,k}]  &\mbox{ for $j=1$.}
  \end{array}\right.
\end{equation}

\begin{definition}
The \textbf{rapidly expanding subspace} $E^+_\Lambda(\Omega_\Lambda) \subset H^d(\Omega_\Lambda;\mathbb{R})$ is the direct sum of all generalized eigenspaces $E_i$ of $\Phi_\Lambda^*$ such that the corresponding eigenvalues $\nu_i$ of $\Phi_{\Lambda}^*$ satisfy
\begin{equation}
\label{eqn:RES}
\frac{\log |\nu_i|}{\log \nu_1} \geq 1 - \frac{\log|\lambda_d|}{\log \nu_1}.
\end{equation}
\end{definition}
Let us briefly comment on what (\ref{eqn:RES}) is meant to capture. We will be approximating the integrated density of states through ergodic integrals. This means we will be integrating certain functions on larger and larger sets. As such, the boundary of these sets will have certain contribution to the ergodic integral. The inequality (\ref{eqn:RES}) captures classes which are represented by forms whose growth in these types of integrals is greater than the contribution to the integral given by the boundary. See (\ref{eqn:ergExpansion}) and (\ref{eqn:intBound})for more details.

We set $I^+_\Lambda = I^{+,>}_\Lambda \cup I^{+,=}_\Lambda$ be the index set of classes $[\eta_{i,j,k}]$ which form a generalized eigenbasis for $E^+_\Lambda$, where the indices in $I^{+,>}_\Lambda$ contains vectors corresponding to a strict inequality in (\ref{eqn:RES}) and the indices in $I^{+,=}_\Lambda$ correspond to vectors associated to eigenvalues which give an equality in (\ref{eqn:RES}). Note that $I^{+,=}_\Lambda$ can be empty but $I^{+,>}_\Lambda$ always has at least one element.

Let $\eta_{i,j,k}\in\Delta_\Lambda^d$ be a representative of the class $[\eta_{i,j,k}]$ in the eigenbasis in (\ref{eqn:basisAct}). By \cite[\S 4]{ST:SA}, there exist forms $\zeta_{i,j,k}\in\Delta_\Lambda^{d-1}$ such that
\begin{equation}
\label{eqn:ACtionExact}
M_\Lambda^*\eta_{i,j,k} = \nu_i \eta_{i,j,k} + \eta_{i,j-1,k} + d\zeta_{i,j,k}.
\end{equation}
For any $\eta\in\Delta^d_\Lambda$, we denote by $\alpha_{i,j,k}(\eta)$ the component of the class $[\eta]$ in the subspace spanned by $[\eta_{i,j,k}]$. In other words, $\eta = \sum_{i,j,k}\alpha_{i,j,k}(\eta)\eta_{i,j,k} + d\omega_\eta$ for some $\omega_\eta\in\Delta_\Lambda^{d-1}$. For any $f\in\Delta_\Lambda^0$ the component $\alpha_{i,j,k}(f)$ is defined by duality: $\alpha_{i,j,k}(f) := \alpha_{i,j,k}(f(\star 1))$.

It was showed in \cite{ST:SA} that elements in the dual space $\mathfrak{C}_{\Lambda}^+ = (E_\Lambda^+)' $ to the rapidly expanding subspace $E_\Lambda^+$ are represented by $\mathbb{R}^d$-invariant $\Lambda$-equivariant currents. It admits a decomposition into eigenvectors of the induced action by $\Phi_\Lambda$:
\begin{equation}
  \label{eqn:homology}
  \mathfrak{C}_\Lambda^+ = \bigoplus_{(i,j,k)\in I^{+}_\Lambda}\, \mathrm{span}\, \mathfrak{C}_{i,j,k},
\end{equation}
where the $\mathfrak{C}_{i,j,k}\in (\Delta_\Lambda^d)'$. We can identify $\mathfrak{C}_\Lambda^+$ with a subspace of the homology of $\Omega_\Lambda$. The currents $\mathfrak{C}_{i,j,k}\in\mathfrak{C}_\Lambda^+$ define distributions $\Gamma_{i,j,k}$ on $C^\infty(\mho_\Lambda)$ as follows. Recall that a function $g\in C^{\infty}(\mho_\Lambda)$ has a cohomology class $[g]\in H^d(\Omega_\Lambda;\mathbb{R})$. The distributions $\Gamma_{i,j,k}$ are defined as
$$\Gamma_{i,j,k}(g):= \mathfrak{C}_{i,j,k}([g])$$
for any $g\in C^{\infty}(\mho_\Lambda)$. We point out that we have that $\Gamma_{1,1,1} = \mathfrak{m}_\Lambda$, where $\mathfrak{m}_\Lambda$ is the canonical measure on $\mho_\Lambda$ coming from the $\mathbb{R}^d$-invariant measure on $\Omega_\Lambda$ (this will be explained in \S \ref{sec:TracesCycles}).
\section{Algebras and cohomology}
\label{sec:ops}
In this section we recall the setup from \cite{LS:algebras} which will allow us to work with operators of the right kind. Let
$$\mathcal{G}_\Lambda(g) = \{(p,\Lambda',q)\in \mathbb{R}^d\times \Omega_\Lambda\times \mathbb{R}^d: p,q\in \Lambda' \}.$$
\begin{definition}
\label{def:kernel}
A \textbf{kernel of finite range} is a function $k\in C(\mathcal{G}_\Lambda)$ such that:
\begin{enumerate}
\item $k$ is bounded;
\item $k$ has finite range: there exists an $R_k>0$ such that $k(p,\Lambda',q) = 0$ whenever $|p-q|\geq R_k$;
\item $k$ is $\mathbb{R}^d$-invariant: for any $t\in\mathbb{R}^d$ we have that $k(p+t,\Lambda'+t,q+t) = k(p,\Lambda',q)$.
\end{enumerate}
\end{definition}
The set of all kernels of finite range is denoted by $\mathcal{K}^{fin}_\Lambda$. Note that for any two $\Lambda_1,\Lambda_2\in\Omega_\Lambda$ we have that $\mathcal{K}^{fin}_{\Lambda_1} = \mathcal{K}^{fin}_{\Lambda_2}$. For any kernel $k\in\mathcal{K}_\Lambda^{fin}$ there is a family of representations $\{\pi_{\Lambda'}\}_{\Lambda'\in\Omega_\Lambda}$ in $\mathcal{B}(\ell^2(\Lambda'))$ defined, for $k\in \mathcal{K}^{fin}_\Lambda$ by
$$\langle K_{\Lambda'} \delta_p,\delta_q\rangle  := \langle (\pi_{\Lambda'} k) \delta_p,\delta_q\rangle = k(p,\Lambda',q)$$
for $p,q\in\Lambda'$.

The family $\{K_{\Lambda'}\}_{\Lambda'\in\Omega_\Lambda}$ is bounded in the product $\Pi_{\Lambda'\in\Omega_\Lambda}\mathcal{B}(\ell^2(\Lambda'))$. To make $\mathcal{K}^{fin}_\Lambda$ a $*$-algebra, the convolution product is defined as
$$(a\cdot b)(p,\Lambda',q) = \sum_{x\in \Lambda'} a(p,\Lambda',x)b(x,\Lambda',q)$$
and the involution by $k^*(p,\Lambda',q) = \bar{k}(q,\Lambda',p)$. As such, the map $\pi:\mathcal{K}^{fin}_\Lambda\rightarrow \Pi_{\Lambda'\in\Omega_\Lambda}\mathcal{B}(\ell^2(\Lambda'))$ is a faithful $*$-representation. The image of this map is denoted by $\mathcal{A}^{fin}_\Lambda$ and it is the algebra of \textbf{operators of finite range}. The completion of this space under the norm $\|A\| = \sup_{\Lambda'\in\Omega_\Lambda} \|A_{\Lambda'}\|$ is denoted by $\mathcal{A}_\Lambda$.
\begin{remark}
As shown in \cite{LS:algebras}, the algebra $\mathcal{A}_\Lambda$ is closely related (in fact, $*$-isomorphic) to the algebras considered in \cite{BHZ:Hulls, kellendonk:noncom}.
\end{remark}
\begin{definition}
The set of \textbf{$\Lambda$-equivariant kernels of finite range} are the kernels of finite range $k\in\mathcal{K}_\Lambda^{fin}$ for which there exists a $R'_k>0$ such that if for two $\Lambda_1,\Lambda_2\in\Omega_\Lambda$ we have that $B_{R'_k}(0)\cap \Lambda_1 = B_{R'_k}(0)\cap \Lambda_2$ then for any $p,q\in B_{R'_k}(0)\cap \Lambda_1$ we have that $k(p,\Lambda_1,q) = k(p,\Lambda_2,q)$.
\end{definition}
We denote the set of $\Lambda$-equivariant kernels by $\mathcal{K}_\Lambda^{tlc}$ which is a $*$-subalgebra of $\mathcal{K}_\Lambda^{fin}$. The image of $\Lambda$-equivariant kernels $\mathcal{A}_\Lambda^{tlc} = \pi \mathcal{K}_\Lambda^{tlc}$ is the $*$-subalgebra of \textbf{$\Lambda$-equivariant operators} of finite range.
\begin{remark}
  \label{rem:interesting}
The algebra $\mathcal{A}_\Lambda^{tlc}$ of $\Lambda$-equivariant operators of finite range includes many of the operators of interest coming from physics. In particular, many Laplacian operators $\triangle\in \mathcal{B}(\ell^2(\Lambda))$ come from $\Lambda$-equivariant operators, and so do Hamiltonians of the form $H = -\triangle + V$, where $V$ is a $\Lambda$-equivariant potential.
\end{remark}
\begin{definition}
Let $\mathcal{A}$ be a $*$-algebra. A \textbf{trace} on $\mathcal{A}$ is a continuous linear functional $\tau:\mathcal{A}\rightarrow \mathbb{C}$ satisfying $\tau(ab) = \tau(ba)$ for any two $a,b\in\mathcal{A}$. The set of all traces of $\mathcal{A}$ forms a $\mathbb{C}$-vector space which we will denote by $\mathrm{Tr}(\mathcal{A})$.
\end{definition}
Let $u:\mathbb{R}^d\rightarrow \mathbb{R}$ be a smooth bump function with compact support and integral one. We now define a family of maps $w_{\Lambda',u}:\mathcal{A}_\Lambda^{tlc} \rightarrow \Delta_\Lambda^0$ parametrized by $\Lambda'\in\Omega_\Lambda$. By duality, these are also maps to $\Delta_\Lambda^d$. For $A = \pi k \in\mathcal{A}_\Lambda^{tlc}$ and $A_{\Lambda'} = \pi_{\Lambda'} k \in\mathcal{B}(\ell^2(\Lambda'))$ we define the map  $w_{\Lambda',u}:A_{\Lambda'}\mapsto f_{A_{\Lambda'}}$ by
\begin{equation}
\label{eqn:opMap}
f_{A_{\Lambda'}}(t) = w_{\Lambda',u}(A)(t) := \sum_{p\in\Lambda'} A_{\Lambda'}(p,p)u(p - t),
\end{equation}
which is a smooth $\Lambda$-equivariant function. As such, it has a cohomology class.
\begin{definition}
\label{def:opClass}
Let $A\in\mathcal{A}_\Lambda^{tlc}$ be a $\Lambda$-equivariant operator of finite range. The \textbf{cohomology class $[A_{\Lambda'}]$ of the operator $A_{\Lambda'} = \pi_{\Lambda'} a\in \mathcal{B}(\ell^2(\Lambda'))$} is defined to be the cohomology class $[A_{\Lambda'}] = [f_{A_{\Lambda'}}(\star 1)] = [w_{\Lambda',u}(A)(\star 1)]\in H^d(\Omega_\Lambda;\mathbb{R}^d)$.
\end{definition}
\begin{remark}
Note that to construct a $\Lambda$-equivariant function $w_{u}(A)(\star 1)$ from $A\in \mathcal{A}_\Lambda^{tlc}$, and therefore to assign it a cohomology class, we used a smooth compactly supported function $u$. However, in the definition of the cohomology class of $A$ there is no reference to $u$. We will see in Proposition \ref{prop:traces} that for the purposes needed, this class is independent of which $u$ we take, as long as it is smooth, compactly supported, and has integral one.
\end{remark}
\begin{lemma}
  \label{lem:independence}
Let $A\in \mathcal{A}_\Lambda^{tlc}$ be a $\Lambda$-equivariant operator of finite range. For any two $\Lambda_1, \Lambda_2\in\Omega_\Lambda$ we have that $[A_{\Lambda_1}] = [A_{\Lambda_2}]$.
\end{lemma}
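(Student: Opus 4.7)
The plan is to exhibit both $\Lambda_j$-equivariant functions $f_{A_{\Lambda_1}}$ and $f_{A_{\Lambda_2}}$ as the images, under $i_{\Lambda_1}$ and $i_{\Lambda_2}$ respectively, of a \emph{single} transversally locally constant function on $\Omega_\Lambda$. Since the cohomology class assignment $C^\infty_{tlc}(\Omega_\Lambda) \to H^d(\Omega_\Lambda;\mathbb{R})$ is intrinsic, this will force the classes $[A_{\Lambda_j}]$ to coincide.

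First, I would define $h_A : \Omega_\Lambda \to \mathbb{R}$ by
$$h_A(\Lambda'') = \sum_{p \in \Lambda''} A_{\Lambda''}(p,p)\, u(p).$$
The sum is finite because $u$ has compact support and $\Lambda''$ is uniformly discrete. I would then verify that $h_A \in C^\infty_{tlc}(\Omega_\Lambda)$: smoothness along the leaf direction follows from smoothness of $u$ together with the $\mathbb{R}^d$-invariance of the kernel $k$ defining $A$, while local constancy in the transverse direction follows from the $\Lambda$-equivariance of $k$. Concretely, if two patterns in $\Omega_\Lambda$ agree on a ball of radius exceeding both $R'_k$ and $\mathrm{diam}(\mathrm{supp}\, u)$, then the two sums defining $h_A$ coincide term by term.

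Second, I would verify that $i_{\Lambda'}(h_A) = f_{A_{\Lambda'}}$ for every $\Lambda' \in \Omega_\Lambda$. Starting from $i_{\Lambda'}(h_A)(t) = h_A(\varphi_t \Lambda')$, a change of summation index translating $\varphi_t \Lambda'$ back to $\Lambda'$, combined with the $\mathbb{R}^d$-invariance in Definition \ref{def:kernel}(iii) to simplify the diagonal entries $A_{\varphi_t \Lambda'}(p,p)$ to $A_{\Lambda'}(q,q)$, yields $\sum_{q \in \Lambda'} A_{\Lambda'}(q,q)\, u(q-t) = f_{A_{\Lambda'}}(t)$.

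Third, since $f_{A_{\Lambda'}} = i_{\Lambda'}(h_A)$ with the \emph{same} $h_A$ for every $\Lambda'$, the class $[A_{\Lambda'}] = [f_{A_{\Lambda'}}(\star 1)] = [i_{\Lambda'}(h_A)(\star 1)]$ is precisely the intrinsic cohomology class that Section \ref{sec:cohomology} attaches to $h_A$ in $H^d(\Omega_\Lambda;\mathbb{R})$; that intrinsic class does not depend on $\Lambda'$. Applying this with $\Lambda' = \Lambda_1$ and $\Lambda' = \Lambda_2$ gives $[A_{\Lambda_1}] = [A_{\Lambda_2}]$.

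The main obstacle is the bookkeeping in the second step: one must fix a convention for the translation action $\varphi_t$ (i.e.\ whether $\varphi_t\Lambda' = \Lambda' + t$ or $\Lambda' - t$) and, accordingly, choose the sign in the argument of $u$ inside $h_A$ so that the shift matches the one in (\ref{eqn:opMap}). Once this accounting is done correctly, the lemma is an immediate application of the intrinsic definition of cohomology classes for transversally locally constant functions recorded immediately before the lemma.
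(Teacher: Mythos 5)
Your proposal is correct and follows essentially the same route as the paper's proof: both arguments rest on the observation that the $\Lambda$-equivariance of the kernel makes the diagonal $p\mapsto A_{\Lambda'}(p,p)$ into (the restriction of) a single transversally locally constant function on the hull, independent of the representative $\Lambda'$, so that every $f_{A_{\Lambda'}}$ is a smoothing of the same function and the class is intrinsic. The only organizational difference is that you define this function globally on $\Omega_\Lambda$ at the outset, whereas the paper defines it separately on a dense subset of the transversal $\mho_\Lambda$ for each $\Lambda_i$, extends by density and local constancy, and then checks the two extensions agree; your packaging is slightly more streamlined but carries the same content, modulo the sign convention for $\varphi_t$ that you correctly flag.
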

\begin{proof}
  Let $u$ be a radially symmetric smooth bump function supported in a very small ball (smaller than the inner radius of $\Lambda$) and of integral one. Consider the functions $f_i(t) = f_{A_{\Lambda_i}}(t)$ for $i = 1, 2$, where $A_{\Lambda_i} = \pi_{\Lambda_i}A$ and $A\in\mathcal{A}_\Lambda^{tlc}$. Then
$$f_i(t) = \sum_{p\in\Lambda_i}A_{\Lambda_i}(p,p)u(p-t).$$

  For $i = 1,2$, consider the functions $h_i:\mho_\Lambda\rightarrow \mathbb{R}$ defined as follows. For each $p\in\Lambda_i$ we can identify it to a point in $\mho_\Lambda$ by translating $\Lambda_i$ in such a way that $p$ is translated to $\bar{0}$. Call this map $m_i:\Lambda_i\rightarrow \mho_\Lambda$. The map $m_i$ is a bijection onto its image $m_i(\Lambda_i)$, which is dense in $\mho_\Lambda$. Let $h_i$ be the function $h_i:m_i(\Lambda_i) \rightarrow \mathbb{R}$ be defined by $h_i(\Lambda_0) = A(m^{-1}_i(\Lambda_0),m^{-1}_i(\Lambda_0))$ for each $\Lambda_0\in m_i(\Lambda_i)$. This function can be extended to the entire transversal $\mho_\Lambda$ as follows.

Recall that since $A_{\Lambda_i}  = \pi_{\Lambda_i} A$ come from a $\Lambda$-equivariant operators of finite range $A\in\mathcal{A}_\Lambda^{tlc}$ then there exists an $R_A$ such that if $\Lambda_a, \Lambda_b\in\Omega_\Lambda$ have the property that $\Lambda_a\cap B_{R_A}(0) = \Lambda_b\cap B_{R_A}(0)$ then $A_{\Lambda_a}(p,q) = A_{\Lambda_b}(p,q)$ for any $p,q\in B_{R_A}(0)\cap \Lambda_a$. For any $\Lambda_0 \in m_i(\Lambda_i)$, let $U_{R_A}(p)$ denote the $1/R_A$-neighborhood of $\Lambda_0$, that is,
$$U_{R_A}(\Lambda_0) = \{\Lambda'\in\mho_\Lambda: \Lambda' \cap B_{R_A}(0) = \Lambda_0 \cap B_{R_A}(0)\}.$$
As such, for any $\Lambda_a \in m_i(\Lambda_i)$ and any $\Lambda_b\in U_{R_A}(\Lambda_a)\cap m_i(\Lambda_i)$, we have that $h_i(\Lambda_a) = h_i(\Lambda_b)$. Since $m_i(\Lambda_i)\cap U_{R_A}(\Lambda_a)$ is dense in $U_{R_A}(\Lambda_a)$, we can extend $h_i$ to all of $U_{R_A}(\Lambda_a)$ since it is constant on the set $m_i(\Lambda_i)\cap U_{R_A}(\Lambda_a)$, and so we make $h_i$ constant on all of $U_{R_A}(\Lambda_a)$. Since $\mho_\Lambda$ is compact, there are finitely many open sets of the form $U_{R_A}(\Lambda_0)$ for which we need to do this, and thus we can extend $h_i$ to be defined on all of $\mho_\Lambda$. Moreover, it is transversally locally constant.

Comparing $h_1$ and $h_2$, we see that if for two $\Lambda_a, \Lambda_b\in\mho_\Lambda$ we have that if $\Lambda_a\in U_{R_A}(\Lambda_b)$, $h_1(\Lambda_a)$ is determined by the value of $A$ determined by the cluster $B_{R_A}(0)\cap \Lambda_a$. Since $h_2(\Lambda_a)$ is also determined by the same clusters and the same $\Lambda$-equivariant operator $A$, $h_2(\Lambda_a) = h_1(\Lambda_b) = h_1(\Lambda_a)$. Since this argument works for any clopen neighborhood of the form $U_{R_A}(\Lambda_0)$ and $\mho_\Lambda$ is compact, $h_1(\Lambda_0) = h_2(\Lambda_0)$ for any $\Lambda_0\in\mho_\Lambda$. So the functions $h_1$ and $h_2$ are the same transversally locally constant functions, and $f_1, f_2$ are obtained by smoothing $h_1,h_2$ along the leaf direction with $u$. As such, they belong to the same cohomology class.
\end{proof}
\begin{remark}
Given Lemma \ref{lem:independence} we will supress the subscript $\Lambda'$ from the operators $A_{\Lambda'}$ because the cohomology class of their traces will be independent of representative.
\end{remark}
By Lemma \ref{lem:independence}, the map $w_{\Lambda',u}$ does not depend on $\Lambda'$ and thus we get a map $w_{u}:\mathcal{A}_\Lambda^{tlc}\rightarrow H^d(\Omega_\Lambda;\mathbb{R})$.

For any $A\in\mathcal{A}_\Lambda^{fin}$ and any bounded set $B\subset \mathbb{R}^d$, we denote by $A_{\Lambda'}|_B$ the restriction of $A_{\Lambda'}$ to the finite dimensional subspace $\ell^2(\Lambda'\cap B)$ of $\ell^2(\Lambda')$.
\section{Traces and asymptotic cycles}
\label{sec:TracesCycles}
We now recall the relevant ergodic theoretic results from \cite{ST:SA}. Let $\Lambda$ be an RFT Delone set and recall the definition of the rapidly expanding subspace $E^+_\Lambda\subset H^d(\Omega_\Lambda;\mathbb{R}^d)$. For a RFT Delone set there exists an expansive matrix $A\in GL^+(d,\mathbb{R}):= \exp(\mathfrak{gl}(d,\mathbb{R}))$ satisfying the conjugacy equation (\ref{eqn:conjugacy}). For any set $B_0\subset \mathbb{R}^d$ with non-empty interior, containing the origin and with a regular boundary (a good Lipschitz domain as defined in \cite[\S 5]{ST:SA}), we define a one-parameter family of sets for $T>1$:
\begin{equation}
\label{eqn:sets}
  B_T = exp\left(d\frac{a\log T}{\log|A|}\right)B_0.
\end{equation}
Here $a\in\mathfrak{gl}(d,\mathbb{R})$ satisfies $A = exp(a)$. These sets have the property that $\mathrm{Vol}(B_T) = \mathrm{Vol}(B_0)\cdot T^d$.

Using the basis in (\ref{eqn:basisAct}), for a good Lipschitz domain $B_0$ containing the origin and $\Lambda_0\in\Omega_\Lambda$, for any $f\in C^\infty_{tlc}(\Omega_\Lambda)$ we can write its ergodic integral as
\begin{equation}
  \label{eqn:ergExpansion}
\int_{B_T} f\circ \varphi_t(\Lambda_0)\, dt =  \sum_{(i,j,k)\in I^+_\Lambda}\alpha_{i,j,k}(f) \int_{B_T} \eta_{i,j,k} +  \mathcal{O}(|\partial B_T|),
\end{equation}
where the $\eta_{i,j,k}$ are the representatives of the basis in (\ref{eqn:basisAct}), and $\alpha_{i,j,k}(f)$ only depend on the cohomology class of $f$. By \cite[Lemma 5]{ST:SA} we have that $\mathrm{Vol}(\partial B_T)$ is proportional to $T^{d\left(1-\frac{\log|\lambda_d|}{\log \nu_1}\right)}$, thus $\mathcal{O}(|\partial B_T|)$ in the expansion above includes the contributions to the ergodic integral of growth comparable to the volume of the boundary $\partial B_T$. Moreover, by \cite[Proposition 6]{ST:SA},
\begin{equation}
  \label{eqn:intBound}
\left| \alpha_{i,j,k}(f) \int_{B_T} \eta_{i,j,k} \right| \leq C_{\Lambda, B_0}|\alpha_{i,j,k}(f)|L(i,j,T) T^{d\frac{\log |\nu_i|}{\log \nu_1}},
\end{equation}
where $L(i,j,T)$ is a non-negative power of $\log T$ (see (\ref{eqn:currents1}) and (\ref{eqn:currents2}) below), for all $T>0$. The main result of \cite{ST:SA} states that there exist $|I^+_\Lambda|$ closed, $\mathbb{R}^d$-invariant, $\Lambda$-equivariant currents $\{\mathfrak{C}_{i,j,k}\}_{(i,j,k)\in I^+_\Lambda}$ which control the growth of ergodic integrals (\ref{eqn:ergExpansion}).

We now recall the construction of the asymptotic cycles from \cite[\S 5.3]{ST:SA}. For $T>3$ and an index $(i,j,k)\in I_\Lambda^+$, define the $\Lambda$-equivariant currents $\mathfrak{C}^{B_0,T}_{i,j,k}$as
\begin{equation}
\label{eqn:functl1}
\begin{split}
\mathfrak{C}_{i,j,k}^{B_0,T} : \eta\mapsto \mathfrak{C}_{i,j,k}^{B_0,T}(\eta) &=  \int_{B_T} \eta -
\sum_{\substack{(i',j',k')\leq (i,j,k) \\ k'\neq k}}\alpha_{i',j',k'}(\eta) \int_{B_T} \eta_{i',j',k'}\\
&= \sum_{\substack{(i',j',k')\geq (i,j,k) \\ k'\neq k}}\alpha_{i',j',k'}(\eta) \int_{B_T} \eta_{i',j',k'} + \mathcal{O}(|\partial B_T|)
\end{split}
\end{equation}
for any $\eta\in\Delta_\Lambda^d$. Let $s_i = d\frac{\log |\nu_{i}|}{\log|\nu_1|}$. We can now average to define the currents. For an index $(i,j,k)\in I_\Lambda^{+,>}$,
\begin{equation}
\label{eqn:currents1}
  \mathfrak{C}_{i,j,k}([\eta]) = \limsup_{T\rightarrow \infty} \frac{1}{(\log T)^{j-1} T^{s_i}}\mathfrak{C}_{i,j,k}^{B_0,T}(\eta),
\end{equation}
which, by (\ref{eqn:intBound}), exists. In this case $L(i,j,T) = (\log T)^{j-1}$. For an index $(i,j,k)\in I_\Lambda^{+,=}$,
\begin{equation}
\label{eqn:currents2}
  \mathfrak{C}_{i,j,k}([\eta]) = \limsup_{T\rightarrow \infty} \frac{1}{(\log T)^{j} T^{s_i}}\mathfrak{C}_{i,j,k}^{B_0,T}(\eta).
\end{equation}
In this case $L(i,j,T) = (\log T)^{j}$.

In the notation above, we emphasize that the functionals $\mathfrak{C}_{i,j,k}$ only depend on the cohomology class $[f]$ of $f$, which makes them cycles. These functionals yield asymptotic cycles $\mathfrak{C}_{i,j,k}$ in the sense that they are defined by an averaging procedure along orbits of the $\mathbb{R}^d$ action. In fact, $\mathfrak{C}_{1,1,1}$ is the Schwartzman-Ruelle-Sullivan asymptotic cycle corresponding to the leading eigenvalue $\nu_1 = |A|$. Close examination will convince the reader that they also satisfy $\mathfrak{C}_{i,j,k}([f]) = 0 $ if and only if $ \alpha_{i,j,k}(f)=0$. In fact, $\mathfrak{C}_{i,j,k}$ is a non-zero multiple of $\alpha_{i,j,k}$. By scaling the forms $\eta_{i,j,k}$ appropriately, that is, by scaling $\eta_{i,j,k}$ such that
\begin{equation}
  \label{eqn:scaleForm}
  \limsup_{T\rightarrow \infty}\frac{1}{L(i,j,k)T^{s_i}}\int_{B_T}\eta_{i,j,k} = 1,
\end{equation}
we can assume that indeed $\mathfrak{C}_{i,j,k} = \alpha_{i,j,k}$.

For $(i,j,k)\in I^+_\Lambda$, define the map $\tau_{i,j,k}:\mathcal{A}_\Lambda^{tlc}\rightarrow \mathbb{R}$ by
\begin{equation}
\label{eqn:traces}
\tau_{i,j,k}: A\mapsto \mathfrak{C}_{i,j,k}([A]) = \mathfrak{C}_{i,j,k}([w_{\Lambda,u}(A)(\star 1)]).
\end{equation}
\begin{proposition}
\label{prop:traces}
The maps $\tau_{i,j,k}$ defined in (\ref{eqn:traces}) are traces on $\mathcal{A}_\Lambda^{tlc}$.
\end{proposition}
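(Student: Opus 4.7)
The plan is to verify the three defining properties of a trace --- linearity, continuity, and the cyclic identity $\tau_{i,j,k}(AB) = \tau_{i,j,k}(BA)$ --- with most of the work concentrated in the cyclic identity and in the well-definedness of the construction with respect to the choice of bump function $u$.

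Linearity is immediate from the linearity of $A \mapsto w_{\Lambda,u}(A)$, of the passage to cohomology, and of the current $\mathfrak{C}_{i,j,k}$. For continuity in the topology on $\mathcal{A}_\Lambda^{tlc}$ inherited from $\|A\| = \sup_{\Lambda'}\|A_{\Lambda'}\|$, I would use the uniform bound $|A_{\Lambda'}(p,p)| \leq \|A\|$, finiteness of local complexity to control the number of points of $\Lambda'$ in any compact set, and the estimate \eqref{eqn:intBound} to propagate this through to continuity of $\mathfrak{C}_{i,j,k}([w_u(A)])$ as a functional of $A$.

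The cyclic identity is the heart of the proposition. I would compute directly: writing $A = \pi k_A$, $B = \pi k_B$ and using the convolution product,
\begin{equation*}
w_{\Lambda,u}(AB)(t) - w_{\Lambda,u}(BA)(t) = \sum_{p,x\in\Lambda'} k_A(p,\Lambda',x)\,k_B(x,\Lambda',p)\,\bigl(u(p-t) - u(x-t)\bigr),
\end{equation*}
where the finite-range properties restrict the sum to pairs with $|p-x|$ uniformly bounded. By the fundamental theorem of calculus,
\begin{equation*}
u(p-t) - u(x-t) = \int_0^1 (p-x)\cdot\nabla u\bigl(sp + (1-s)x - t\bigr)\,ds,
\end{equation*}
and setting
\begin{equation*}
V(t) := -\sum_{p,x\in\Lambda'} k_A(p,\Lambda',x)\,k_B(x,\Lambda',p)\,(p-x)\int_0^1 u\bigl(sp+(1-s)x - t\bigr)\,ds,
\end{equation*}
a direct computation shows $w_{\Lambda,u}(AB) - w_{\Lambda,u}(BA) = \mathrm{div}(V)$, so that $\bigl(w_{\Lambda,u}(AB) - w_{\Lambda,u}(BA)\bigr)(\star 1) = d\bigl(V\lrcorner(\star 1)\bigr)$ is exact in $\Delta_\Lambda^d$, provided $V \lrcorner (\star 1) \in \Delta_\Lambda^{d-1}$. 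Since $\mathfrak{C}_{i,j,k}$ only depends on cohomology classes, applying it yields $\tau_{i,j,k}(AB) = \tau_{i,j,k}(BA)$.

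Independence of the choice of bump function $u$ proceeds by a parallel argument: if $u,v$ are two smooth compactly supported densities of integral one, then $u-v$ integrates to zero and equals $\mathrm{div}(W)$ for some smooth compactly supported vector field $W$, so $w_{\Lambda,u}(A) - w_{\Lambda,v}(A)$ is again the divergence of a $\Lambda$-equivariant vector field and the resulting $d$-forms are cohomologous. The main obstacle I anticipate is precisely the verification of $\Lambda$-equivariance of the auxiliary vector field $V$ (and its analogue for the $u$-independence argument): one must argue carefully that for each $t$ only finitely many pairs $(p,x)$ contribute --- those with $p,x$ within a bounded ball around $t$ of radius controlled by $R_{k_A}$, $R_{k_B}$, and $\mathrm{diam}(\mathrm{supp}\,u)$ --- and that the coefficient produced depends only on the cluster of $\Lambda'$ in that ball, which is the $\Lambda$-equivariance condition required to conclude $V \lrcorner (\star 1) \in \Delta_\Lambda^{d-1}$. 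Modulo this bookkeeping, the exact-form argument closes the proof.
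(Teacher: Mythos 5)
Your proof of the cyclic identity is correct, but it reaches it by a genuinely different route than the paper. The paper never shows that $(f_{AB}-f_{BA})(\star 1)$ is exact; instead it integrates $f_{AB-BA}$ over $B_T$ and observes that the interaction terms $a(p,\Lambda,q)b(q,\Lambda,p)$ and $a(q,\Lambda,p)b(p,\Lambda,q)$ cancel in pairs whenever both points lie well inside $B_T$, so that $\int_{B_T} f_{AB-BA}\,dt = \mathcal{O}(|\partial B_T|)$; comparing with the expansion \eqref{eqn:ergExpansion} then forces $\alpha_{i,j,k}(f_{AB-BA})=0$ for every $(i,j,k)\in I_\Lambda^+$, which is all the currents can detect. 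Your divergence identity $f_{AB}-f_{BA}=\mathrm{div}(V)$ is the form-level packaging of exactly that discrete cancellation (the antisymmetry under $p\leftrightarrow x$ is what makes the commutator a discrete divergence), and it buys strictly more: it shows the class of $(f_{AB}-f_{BA})(\star 1)$ vanishes in all of $H^d(\Omega_\Lambda;\mathbb{R})$, not merely its projection to the rapidly expanding subspace $E^+_\Lambda$, and the same device yields independence of the full cohomology class of $w_{\Lambda,u}(A)$ from the bump function $u$, which the paper only obtains up to boundary terms. The bookkeeping you flag does close: for fixed $t$ only pairs with $|p-x|\le R_*$ and both points within $r_u+R_*$ of $t$ contribute, the sum is locally finite and locally uniformly so, and the coefficients of $V$ near $t$ depend only on the cluster of $\Lambda'$ in a ball about $t$ of radius controlled by the equivariance radii of the two kernels together with $R_*$ and $r_u$, which is precisely the condition of Definition \ref{def:PE}; so $V$ is a smooth $\Lambda$-equivariant vector field and the contraction of $V$ with $\star 1$ lies in $\Delta_\Lambda^{d-1}$.

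One caveat: your continuity paragraph does not work as stated. A bound of the form $|\tau_{i,j,k}(A)|\le C\|A\|$ would let the functional extend to the C$^*$-completion $\mathcal{A}_\Lambda$, which the paper explicitly asserts is impossible for $(i,j,k)\neq(1,1,1)$; moreover \eqref{eqn:intBound} bounds the ergodic integral \emph{in terms of} $|\alpha_{i,j,k}(f)|$ rather than controlling $|\alpha_{i,j,k}(f_A)|$ by $\|A\|$. The paper's own proof is equally silent on continuity (which must be understood with respect to a finer topology on $\mathcal{A}_\Lambda^{tlc}$), so this does not undermine the substance of your argument, but the claim of norm-continuity should be dropped.
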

\begin{proof}
  Let $u:\mathbb{R}^d\rightarrow \mathbb{R}$ be a smooth function with compact support and integral one. We now turn to applying the currents $\mathfrak{C}_{i,j,k}$ to the forms $w_{\Lambda, u}(A)(\star 1)$ obtained through the map (\ref{eqn:opMap}) for operators $A\in\mathcal{A}_\Lambda^{tlc}$. It suffices to show that $\tau_{i,j,k}$ satisfies $\tau_{i,j,k}(AB) = \tau_{i,j,k}(BA)$ for $A = \{A_{\Lambda'}\}_{\Lambda'\in\Omega_\Lambda}$ and $B = \{B_{\Lambda'}\}_{\Lambda'\in\Omega_\Lambda}$. Let $A,B\in\mathcal{A}_\Lambda^{tlc}$.

We first compute the images of $AB$ and $BA$, respectively, under the map $w_{\Lambda, u}$. Let $a,b \in \mathcal{K}_\Lambda^{tlc}$ satisfy $\pi_\Lambda a = A$ and $\pi_\Lambda b = B$. Recalling the convolution product and the $*$-involution for kernels of finite range, and (\ref{eqn:opMap}),
\begin{equation}
\label{eqn:first}
f_{AB}(t) = w_{\Lambda, u}(AB)(t)= \sum_{p\in\Lambda} (AB)_\Lambda(p,p)u(p-t) = \sum_{p\in\Lambda} \sum_{x\in\Lambda}a(p,\Lambda, x)b(x,\Lambda, p)   u(p-t).
\end{equation}
Likewise:
\begin{equation}
\label{eqn:second}
f_{BA}(t) = w_{\Lambda, u}(BA)(t)= \sum_{p\in\Lambda} (BA)_\Lambda(p,p)u(p-t) = \sum_{p\in\Lambda} \sum_{x\in\Lambda}b(p,\Lambda, x) a(x,\Lambda, p)  u(p-t).
\end{equation}
The difference is
$$f_{AB}(t) - f_{BA}(t) = \sum_{p\in \Lambda}\sum_{q\in\Lambda}\left( a(p,\Lambda,q)b(q,\Lambda,p) - b(p,\Lambda,q)a(q,\Lambda,p)   \right)u(p-t)  = f_{AB - BA}(t) .$$
Let
$$D_T = \int_{B_T} f_{AB - BA}(t)\, dt$$
and $R_*$ be the maximum of the ranges of $a\in \mathcal{K}_\Lambda^{tlc}$ and $b\in \mathcal{K}_\Lambda^{tlc}$. By definition, for any $p,q\in\Lambda$ with $|p-q| \geq R_*$, we have that $a(p,\Lambda,q)= b(p,\Lambda,q) = 0$. Denote by $r_u > 0$ a number such that the support of $u$ is contained in the ball of radius $r_u$ around the origin. Denote by $B^{A,B,r_u}_T\subset B_T$ the subset
$$B_T^{A,B,r_u} = \{x\in B_T:\mathrm{dist}(x,\partial B_T)> 2(r_u +R_*)  \},$$
which is not empty for all large enough $T>0$. For $r>0$ and a subset $E\subset\mathbb{R}^d$ let
$$\partial_r E = \{x\in \mathbb{R}^d:\mathrm{dist}(x,\partial E)\leq r  \}$$
be the $r$-neighborhood of $\partial E$.

Suppose $p_1\in\Lambda$ is such that $p_1 + B_{r_u}(0) \subset B_T^{A,B,r_u}$, and denote by $q_1(p_1),\dots, q_{N(p_1)}(p_1)$, the other points with which $p_1$ interacts. In other words, $a(p_1,\Lambda,q_i)b(q_i,\Lambda,p_1)\neq 0$ or $b(p_1,\Lambda,q_i)a(q_i,\Lambda,p_1)\neq 0$ for all $i$. Since the integral of $u$ is 1, then $p_1$ contributes
$$\sum_{i=1}^{N(p_1)} a(p_1,\Lambda,q_i)b(q_i,\Lambda,p_1) - b(p_1,\Lambda,q_i)a(q_i,\Lambda,p_1) $$
in the sum under the integral $D_T$.

Let $p_2$ be such that $a(p_1,\Lambda,p_2)b(p_2,\Lambda,p_1)\neq 0$ or $b(p_1,\Lambda,p_2)a(p_2,\Lambda,p_1)\neq 0$ and $|p_1-p_2|\leq R_*$. Let $q_1(p_2),\dots, q_{N(p_2)}(p_2)$ be the points with which $p_2$ interacts: $a(p_1,\Lambda,q_j)b(q_j,\Lambda,p_1)\neq 0$ or $b(p_1,\Lambda,q_j)a(q_j,\Lambda,p_1)\neq 0$ for all $i$. Then $p_2 = q_{i'}(p_1)$ for some $i'$ and $p_1 = q_{j'}(p_2)$ for some $j'$. Moreover, the contrubution of $p_2$ in the sum of $D_T$ is
$$\sum_{j=1}^{N(p_2)} a(p_2,\Lambda,q_j)b(q_j,\Lambda,p_2) - b(p_2,\Lambda,q_j)a(q_j,\Lambda,p_2) .$$
Now we can consider the combined contribution of $p_1$ and $p_2$ in the sum of $D_T$. It is
\begin{equation*}
  \label{eqn:joint}
  \begin{split}
 & a(p_1,\Lambda,q_{i'}(p_1))b(q_{i'}(p_1),\Lambda,p_1) - b(p_1,\Lambda,q_{i'}(p_1))a(q_{i'}(p_1),\Lambda,p_1) \\
  &\hspace{.4in}+ \sum_{\substack{i=1 \\ i\neq i'}}^{N(p_1)} a(p_1,\Lambda,q_i)b(q_i,\Lambda,p_1) - b(p_1,\Lambda,q_i)a(q_i,\Lambda,p_1) \\
   &\hspace{.4in}\hspace{.4in} + a(p_2,\Lambda,q_{j'}(p_2))b(q_{j'}(p_2),\Lambda,p_2) - b(p_2,\Lambda,q_{j'}(p_2))a(q_{j'}(p_2),\Lambda,p_2) \\
   &\hspace{.4in}\hspace{.4in}\hspace{.4in}+ \sum_{\substack{j=1 \\ j\neq j'}}^{N(p_2)} a(p_2,\Lambda,q_j)b(q_j,\Lambda,p_2) - b(p_2,\Lambda,q_j)a(q_j,\Lambda,p_2) \\
  =&\,\,\, a(p_1,\Lambda,p_2)b(p_2,\Lambda,p_1) - b(p_1,\Lambda,p_2)a(p_2,\Lambda,p_1) \\
  &\hspace{.4in}+ \sum_{\substack{i=1 \\ i\neq i'}}^{N(p_1)} a(p_1,\Lambda,q_i)b(q_i,\Lambda,p_1) - b(p_1,\Lambda,q_i)a(q_i,\Lambda,p_1) \\
   &\hspace{.4in}\hspace{.4in} + a(p_2,\Lambda,p_1)b(p_1,\Lambda,p_2) - b(p_2,\Lambda,p_1)a(p_1,\Lambda,p_2) \\
  &\hspace{.4in}\hspace{.8in}+ \sum_{\substack{j=1 \\ j\neq j'}}^{N(p_2)} a(p_2,\Lambda,q_j)b(q_j,\Lambda,p_2) - b(p_2,\Lambda,q_j)a(q_j,\Lambda,p_2) \\
  &= \sum_{\substack{i=1 \\ i\neq i'}}^{N(p_1)} a(p_1,\Lambda,q_i)b(q_i,\Lambda,p_1) - b(p_1,\Lambda,q_i)a(q_i,\Lambda,p_1)\\
  &\hspace{.6in}+\sum_{\substack{j=1 \\ j\neq j'}}^{N(p_2)} a(p_2,\Lambda,q_j)b(q_j,\Lambda,p_2) - b(p_2,\Lambda,q_j)a(q_j,\Lambda,p_2).
 \end{split}
\end{equation*}
In short, the interactions between $p_1$ and $p_2$ cancel out. Since this sort of cancellation happens for all pairs of points $p^*, q^*\in B_T^{A,B,r_u}$ with $a(p^*,\Lambda,q^*)b(q^*,\Lambda,p^*)\neq 0$ or $b(p^*,\Lambda,q^*)a(q^*,\Lambda,p^*)\neq 0$,
\begin{equation}
  \label{eqn:difference}
  \begin{split}
    \left| \int_{B_T} f_{AB - BA}(t)\, dt \right| &\leq  \left| \int_{B_T \backslash B_T^{A,B,r_u}} f_{AB - BA}(t)\, dt \right| \leq   \int_{\partial_{2(r_u + R_*)}B_T} |f_{AB - BA}(t)|\, dt  \\
    &\leq  \#( \partial_{2(r_u + R_*)}B_T \cap \Lambda) \max_{p\in\Lambda}|A(p,p)| \\
    &\leq 4(r_u + R_*)D_{\Lambda}\|A\| \mathrm{Vol}(B_0)T^{d\left(1-\frac{\log\lambda_d}{\log\nu_1}\right)} = \mathcal{O}(|\partial B_T|)
  \end{split}
\end{equation}
for all $T$ large enough, where we have used \cite[Lemma 5]{ST:SA} in the last inequality and $D_\Lambda$ only depends on the Delone set $\Lambda$. Comparing (\ref{eqn:difference}) with its expansion through (\ref{eqn:ergExpansion}), we have that $\alpha_{i,j,k}(f_{AB-BA}) = 0$ for all $(i,j,k)\in I^+_\Lambda$ and thus  $\mathfrak{C}_{i,j,k}([AB-BA]) = 0$ for all $(i,j,k)\in I_\Lambda^+$. As such, $\tau_{i,j,k}(AB) = \mathfrak{C}_{i,j,k}([AB])=\mathfrak{C}_{i,j,k}([BA]) = \tau_{i,j,k}(BA)$. The proposition follows since this works for any bump function $u$.
\end{proof}

Let $\tau_{i,j,k} = \mathfrak{C}_{i,j,k}\circ w:\mathcal{A}_\Lambda^{tlc}\rightarrow \mathbb{C}$ be the traces above.
\begin{corollary}
  \label{cor:traces}
  The space
$$    \mathrm{Tr}^+_\Lambda(\mathcal{A}_\Lambda^{tlc}) := \bigoplus_{(i,j,k)\in I^{+}_\Lambda}\, \mathrm{span}\, \tau_{i,j,k} $$
is a subspace of dimension $\mathrm{dim}\,E^+_\Lambda$ of the space of traces $\mathrm{Tr}(\mathcal{A}_\Lambda^{tlc})$.
\end{corollary}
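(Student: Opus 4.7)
The plan is to reduce linear independence of the traces to linear independence of the currents. Each $\tau_{i,j,k}$ factors as $\mathfrak{C}_{i,j,k}\circ w$, where $w : \mathcal{A}_\Lambda^{tlc} \to H^d(\Omega_\Lambda;\mathbb{R})$ is the cohomology-class map of Definition~\ref{def:opClass}. The currents $\mathfrak{C}_{i,j,k}$ form a basis of $\mathfrak{C}_\Lambda^+ = (E_\Lambda^+)'$ by the decomposition (\ref{eqn:homology}), hence are linearly independent as functionals on $H^d(\Omega_\Lambda;\mathbb{R})$. So it suffices to show that the image of $w$ is all of $H^d(\Omega_\Lambda;\mathbb{R})$, or at least projects onto $E_\Lambda^+$ under any complementary direct sum decomposition.

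To realize enough operators, for each locally constant $g \in C^\infty(\mho_\Lambda)$ I would construct the diagonal multiplication operator $A_g \in \mathcal{A}_\Lambda^{tlc}$ with kernel $k_g(p,\Lambda',q) = \delta_{p,q}\, g(\varphi_{-p}\Lambda')$. This kernel has range zero, and its $\Lambda$-equivariance is the reverse of the correspondence between operators and transversally locally constant functions worked out in Lemma~\ref{lem:independence}. Unpacking (\ref{eqn:opMap}) gives
\[
w(A_g)(t) \;=\; \sum_{p\in\Lambda} g(\varphi_{-p}\Lambda)\, u(p-t),
\]
which is exactly the smoothing of $g$ along the leaf direction used in Section~\ref{sec:cohomology} to define $[g]\in H^d(\Omega_\Lambda;\mathbb{R})$. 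Hence $w(A_g) = [g]$. Since the assignment $g\mapsto [g]$ is surjective onto $H^d(\Omega_\Lambda;\mathbb{R})$ (as recalled at the end of Section~\ref{sec:cohomology}, where linear functionals on $C^\infty(\mho_\Lambda)$ are identified with linear functionals on cohomology), it follows that $w(\mathcal{A}_\Lambda^{tlc}) = H^d(\Omega_\Lambda;\mathbb{R})$.

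Given this surjectivity, any vanishing linear combination $\sum_{(i,j,k)} c_{i,j,k}\tau_{i,j,k} = 0$ on $\mathcal{A}_\Lambda^{tlc}$ yields $\sum c_{i,j,k}\mathfrak{C}_{i,j,k} = 0$ as a functional on $H^d(\Omega_\Lambda;\mathbb{R})$ upon evaluation on the $A_g$, and the linear independence of the currents then forces $c_{i,j,k} = 0$ for every index. The dimension count $|I_\Lambda^+| = \dim E_\Lambda^+$ is built into the direct sum (\ref{eqn:homology}), yielding the claimed dimension of $\mathrm{Tr}^+_\Lambda(\mathcal{A}_\Lambda^{tlc})$.

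I expect the main technical friction to lie in the verification that $A_g$ genuinely lies in $\mathcal{A}_\Lambda^{tlc}$: one must choose the $\Lambda$-equivariance scale of $A_g$ with a suitable buffer relative to the local-constancy scale of $g$, so that coincidence of $\Lambda_1$ and $\Lambda_2$ on a larger ball controls the values of $g(\varphi_{-p}\Lambda_i)$ for all $p$ in the smaller ball. This mirrors the compactness/open-cover argument used in Lemma~\ref{lem:independence} to extend $h_i$ from $m_i(\Lambda_i)$ to all of $\mho_\Lambda$.
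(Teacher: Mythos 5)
Your argument is correct, and it is worth noting that the paper itself offers no proof of this corollary: it is stated as an immediate consequence of Proposition \ref{prop:traces}, which only establishes that each $\tau_{i,j,k}$ is a trace and says nothing about their linear independence. Your reduction supplies exactly the missing content. The key point is the surjectivity of $w$ onto $H^d(\Omega_\Lambda;\mathbb{R})$, which you obtain by realizing every class $[g]$, $g\in C^\infty(\mho_\Lambda)$, as $w(A_g)$ for a diagonal (range-zero) $\Lambda$-equivariant operator; combined with the fact, implicit in the decomposition (\ref{eqn:homology}) and in the identity $\mathfrak{C}_{i,j,k}=\alpha_{i,j,k}$ after the normalization (\ref{eqn:scaleForm}), that the currents are linearly independent functionals on $H^d(\Omega_\Lambda;\mathbb{R})$, this forces all coefficients of a vanishing combination $\sum c_{i,j,k}\tau_{i,j,k}$ to vanish. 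Note that full surjectivity of $w$ is more than you need: it suffices that the composition of $w$ with the spectral projection onto $E^+_\Lambda$ be onto, and the surjectivity of $g\mapsto[g]$ (the standard fact that top-degree pattern-equivariant cohomology is spanned by classes of indicator functions of clopen transversal sets, which the paper invokes at the end of Section \ref{sec:cohomology}) gives this. Your flagged technical point about $A_g\in\mathcal{A}_\Lambda^{tlc}$ is genuine but benign: by the $\mathbb{R}^d$-invariance in Definition \ref{def:kernel} the kernel is determined by its values at $p=q=0$ as a function on $\mho_\Lambda$, and local constancy of $g$ at scale $R_g$ yields $\Lambda$-equivariance at any scale $R'_k$ exceeding $R_g$ by the buffer you describe, exactly as in the covering argument of Lemma \ref{lem:independence}. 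The dimension count $|I^+_\Lambda|=\dim E^+_\Lambda$ is indeed immediate since the indices label a generalized eigenbasis of $E^+_\Lambda$.
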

\begin{proof}[Proof of Theorem \ref{thm:main}]
Let $B_0$ be a good Lipschitz domain, $A\in \mathcal{A}_\Lambda^{tlc}$, and $u$ a smooth bump function of compact support and integral one. Using the map in (\ref{eqn:opMap}), let $f_A(t) = w_{\Lambda,u}(A)(t)$ be a $\Lambda$-equivariant function and denote
$$I_T = \int_{B_T} f_A(t)\, dt.$$
We want to bound the quantity
\begin{equation*}
  \begin{split}
    \left|  \sum_{q\in \Lambda\cap B_T} A(q,q)  -  I_T \right| &= \left|  \sum_{q\in \Lambda\cap B_T} A(q,q) - \int_{B_T} f_A(t)\, dt \right| \\
    &= \left|  \sum_{p\in \Lambda\cap B_T} A(p,p) - \int_{B_T} \sum_{p\in \Lambda} A(p,p)u(p-t) \, dt \right| .
  \end{split}
\end{equation*}
Let $r_u > 0$ be such that $\mathrm{supp}(u)\subset B_{r_u}(0)$ and let $T$ be large enough so that $B_{r_u}(0)\subset B_T$. Suppose $p\in\Lambda$ is such that $p + B_{r_u}(0) \subset B_T$. Since the integral of $u$ is 1, then $p$ contributes $A(p,p)$ in the sum under the integral $I_T$, and so it cancels with the same quantity in $\sum_{p\in\Lambda} A(p,p)$. This happens to all $p\in \Lambda\cap B_T$ with the exception of those $q\in\Lambda\cap B_T$ with distance to $\partial B_T$ less than or equal to $2r_u$. Thus
\begin{equation}
  \label{eqn:traces3}
  \begin{split}
    \left|  \sum_{q\in \Lambda\cap B_T} A(q,q)  -  I_T \right| &= \left|  \sum_{p\in \Lambda\cap B_T} A(p,p) - \int_{B_T} \sum_{p\in \Lambda} A(p,p)u(p-t) \, dt \right| \\
    &\leq \int_{\partial_{3r_u} B_T} \left| \sum_{p\in\partial_{3r_u} B_T\cap \Lambda} A(p,p)u(p-t)\right| \, dt  \\
    &\leq \#(\partial_{3r_u}B_T\cap \Lambda) \|A\| \\
    &\leq 4r_u D_\Lambda \|A\|\mathrm{Vol}(B_0) T^{d\left(1-\frac{\log\lambda_d}{\log\nu_1}\right)} = \mathcal{O}(|\partial B_T|).
  \end{split}
\end{equation}
Therefore, up to terms of order $|\partial B_T|$, $\sum_{q\in \Lambda\cap B_T} A(q,q)$ and $\int_{B_T}f_A(t)\, dt$ agree. For every index $(i,j,k)\in I^+_\Lambda$ define the function
\begin{equation}
  \label{eqn:PsiFuns}
\Psi_{i,j,k}^{B_0}(T) :=\frac{1}{L(i,j,T)T^{\frac{\log |\nu_i|}{\log\nu_1}}} \int_{B_T}\eta_{i,j,k}: \mathbb{R}^+\rightarrow \mathbb{R}.
\end{equation}
By (\ref{eqn:scaleForm}), these functions satisfy $\limsup_{T\rightarrow\infty}\Psi_{i,j,k}^{B_0}(T) = 1$. Using (\ref{eqn:functl1}),
\begin{equation}
  \label{eqn:main1}
  \begin{split}
    \mathfrak{C}_{i,j,k}^{B_0,T}(f_A(\star 1))
    &= \int_{B_T} f_A(t)\, dt - \sum_{\substack{(i',j',k')\leq (i,j,k) \\ k'\neq k}}\alpha_{i',j',k'}([f_A]) \int_{B_T} \eta_{i',j',k'} \\
    &= \int_{B_T} f_A(t)\, dt - \sum_{\substack{(i',j',k')\leq (i,j,k) \\ k'\neq k}}\mathfrak{C}_{i',j',k'}([f_A]) \Psi_{i',j',k'}^{B_0}(T)L(i',j',T)T^{\frac{\log |\nu_{i'}|}{\log\nu_1}} \\
    &= \int_{B_T} f_A(t)\, dt - \sum_{\substack{(i',j',k')\leq (i,j,k) \\ k'\neq k}}\tau_{i',j',k'}(A) \Psi_{i',j',k'}^{B_0}(T)L(i',j',T)T^{\frac{\log |\nu_{i'}|}{\log\nu_1}} \\
    &= \mathrm{tr}(A|_{B_T}) - \sum_{\substack{(i',j',k')\leq (i,j,k) \\ k'\neq k}}\tau_{i',j',k'}(A) \Psi_{i',j',k'}^{B_0}(T)L(i',j',T)T^{\frac{\log |\nu_{i'}|}{\log\nu_1}}  + \mathcal{O}(|\partial B_T|).
  \end{split}
\end{equation}
As such, (\ref{eqn:main}) is obtained through (\ref{eqn:currents1}), (\ref{eqn:currents2}) and (\ref{eqn:main1}).
\end{proof}
Let $\mathcal{S}_\Lambda\subset \mathcal{A}_\Lambda^{tlc}$ be the subset of self-adjoint elements of $\mathcal{A}_\Lambda^{tlc}$. That is, for $A\in \mathcal{S}_\Lambda$, $A_{\Lambda'}\in \mathcal{B}(\ell^2(\Lambda'))$ is self-adjoint and $\Lambda$-equivariant for every $\Lambda'\in\Omega_\Lambda$. Recall that for any any self-adjoint operator $A\in\mathcal{B}(\ell^2(\Lambda))$ we can construct the C$^*$-algebra $C(A) = C(A,1)$ which is generated by $A$ and the identity $1$. That is, it is the completion of $P(A)$ in the operator norm of the set of all polynomials in $A$. The continuous functional calculus states that this algebra is $*$-isomorphic to $C(\sigma(A))$.
\begin{proposition}
\label{prop:injectiveMap}
Let $A\in\mathcal{S}_\Lambda$ be a family of self-adjoint operators and denote by $A_{\Lambda'}$ the associated self-adjoint operator in $\mathcal{B}(\ell^2(\Lambda'))$ for any $\Lambda'\in\Omega_\Lambda$. Then for any $\Lambda'\in\Omega_\Lambda$ there is an injective map $\Theta_{A_{\Lambda'}}:\mathrm{Tr}^+_\Lambda(\mathcal{A}_\Lambda^{tlc}) \rightarrow \mathrm{Tr}(C(A_{\Lambda'},1))$.
\end{proposition}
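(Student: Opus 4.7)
The plan is to define $\Theta_{A_{\Lambda'}}$ by precomposition with the polynomial functional calculus of $A$, extend by continuity to the full C$^*$-algebra $C(A_{\Lambda'},1)$, and finally deduce injectivity from the linear independence of the currents $\mathfrak{C}_{i,j,k}$.

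First I would set, for $\tau\in\mathrm{Tr}^+_\Lambda(\mathcal A_\Lambda^{tlc})$ and any polynomial $p\in\mathbb{C}[x]$,
\[
\Theta_{A_{\Lambda'}}(\tau)\bigl(p(A_{\Lambda'})\bigr):=\tau\bigl(p(A)\bigr),
\]
noting that $p(A)\in\mathcal A_\Lambda^{tlc}$ because the latter is a unital $*$-subalgebra. To see this depends only on the operator $p(A_{\Lambda'})$, suppose $p(A_{\Lambda'})=q(A_{\Lambda'})$. Then $(p-q)$ vanishes on $\sigma(A_{\Lambda'})$; because the translation action on $\Omega_\Lambda$ is minimal and the field $\{A_{\Lambda''}\}$ is continuous and $\mathbb{R}^d$-covariant, the spectrum $\sigma(A_{\Lambda''})$ does not depend on $\Lambda''$. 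Hence $(p-q)(A_{\Lambda''})=0$ for every $\Lambda''$, so by faithfulness of $\pi$ we have $(p-q)(A)=0$ in $\mathcal A_\Lambda^{tlc}$, and $\tau(p(A))=\tau(q(A))$ follows.

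Next, the polynomials in $A_{\Lambda'}$ are dense in $C(A_{\Lambda'},1)\cong C(\sigma(A_{\Lambda'}))$ by Stone--Weierstrass, so extending $\Theta_{A_{\Lambda'}}(\tau)$ to a continuous linear functional on $C(A_{\Lambda'},1)$ requires a sup-norm bound $|\tau(p(A))|\le C_\tau\|p(A_{\Lambda'})\|$. Writing $\tau=\sum c_{i,j,k}\tau_{i,j,k}$ and applying (\ref{eqn:main}) from Theorem \ref{thm:main} to $p(A)$, together with the elementary inequality $|\mathrm{tr}(p(A)|_{B_T})|\le\#(\Lambda\cap B_T)\cdot\|p(A_{\Lambda'})\|\le C T^d\|p(A_{\Lambda'})\|$, and using that the subtractions in (\ref{eqn:main}) precisely remove the contributions growing faster than $L(i,j,T)T^{ds_i}$, I obtain $|\tau_{i,j,k}(p(A))|\le C_{i,j,k,A}\|p(A_{\Lambda'})\|$ for each summand. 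Since $C(A_{\Lambda'},1)$ is commutative, any continuous linear functional on it is automatically a trace, so the continuous extension gives the desired $\Theta_{A_{\Lambda'}}(\tau)\in\mathrm{Tr}(C(A_{\Lambda'},1))$; by Gelfand--Naimark and Riesz it corresponds to a finite signed Borel measure on $\sigma(A_{\Lambda'})$.

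For injectivity, suppose $\Theta_{A_{\Lambda'}}(\tau)=0$ with $\tau=\sum c_{i,j,k}\tau_{i,j,k}$. Then $\tau(p(A))=0$ for every polynomial $p$, which by (\ref{eqn:traces}) translates to
\[
\sum_{(i,j,k)\in I^+_\Lambda}c_{i,j,k}\,\mathfrak{C}_{i,j,k}\bigl([w_{\Lambda,u}(p(A))(\star 1)]\bigr)=0\qquad\text{for every }p\in\mathbb{C}[x].
\]
Since $\{\mathfrak{C}_{i,j,k}\}_{(i,j,k)\in I^+_\Lambda}$ is linearly independent on $E^+_\Lambda$ via the decomposition (\ref{eqn:homology}), the injectivity reduces to showing that the cohomology classes $\{[w_{\Lambda,u}(p(A))(\star 1)]:p\in\mathbb{C}[x]\}$ cannot all be annihilated by any nontrivial $\sum c_{i,j,k}\mathfrak{C}_{i,j,k}$. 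I expect this spanning statement to be the main obstacle: nothing purely formal forces the moments of a single self-adjoint operator to probe every direction in $E^+_\Lambda$. My plan is to exploit the renormalization (\ref{eqn:basisAct}): applying $\Phi_\Lambda^{*n}$ to the class of $p(A)$ separates its generalized eigencomponents by the distinct growth rates $\nu_i^n$ and by the unipotent perturbations in (\ref{eqn:ACtionExact}), and combining this scaling behavior with polynomial variation in $p$ should suffice to distinguish any nonzero linear functional supported on $I^+_\Lambda$, forcing all $c_{i,j,k}=0$.
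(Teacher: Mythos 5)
Your construction of $\Theta_{A_{\Lambda'}}$ --- define it on polynomials by $\tau(p(A))$, check well-definedness, and extend by continuity along a Cauchy sequence of polynomials --- is the same route the paper takes; the well-definedness check via constancy of the spectrum over the hull is a worthwhile addition, since the paper's proof consists only of the definition on polynomials and the bare assertion that $\{\tau_{i,j,k}(\varphi_n(A))\}_n$ is Cauchy ``by the continuous functional calculus.'' The problem is the norm estimate you need for the extension. Your derivation of $|\tau_{i,j,k}(p(A))|\le C_{i,j,k,A}\|p(A_{\Lambda'})\|$ from Theorem \ref{thm:main} does not go through: the terms subtracted in (\ref{eqn:main}) are those with $(i',j',k')\le(i,j,k)$, i.e.\ the \emph{faster}-growing ones, and even granting inductively that those traces are bounded by $C\|p(A)\|$, the quantity inside the parentheses is a difference of terms each of size $O(T^d\|p(A)\|)$; dividing by $L(i,j,T)T^{ds_i}$ with $s_i<1$ and taking $\limsup$ yields a finite number by Theorem \ref{thm:main}, but finiteness is not an estimate linear in $\|p(A)\|$. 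This is not a pedantic point: the paper stresses that the traces $\tau_{i,j,k}$ with $(i,j,k)\neq(1,1,1)$ do \emph{not} extend continuously to the C$^*$-closure of $\mathcal{A}_\Lambda^{tlc}$, which is precisely the failure of such a bound on the full algebra, and Remark \ref{remark:allcontinuous} records the same difficulty for the $\limsup$ expressions. Whatever makes the restriction to the commutative subalgebra $P(A)$ norm-continuous (if it is) must use more than the growth bookkeeping of Theorem \ref{thm:main}; the paper's own proof does not supply it either.

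On injectivity you are right to call it the main obstacle, and your renormalization plan will not close it. The paper's proof is silent on injectivity, so there is no argument there that you missed; but the spanning statement you reduce to is actually false for some $A\in\mathcal{S}_\Lambda$. The class $[w_{\Lambda,u}(p(A))(\star 1)]$ depends on $p$ only through the diagonal $q\mapsto p(A)(q,q)$, so for $A$ a scalar multiple of the identity (or any $A$ whose diagonals generate a low-dimensional subspace of $H^d(\Omega_\Lambda;\mathbb{R})$) the classes $\{[w_{\Lambda,u}(p(A))(\star 1)]: p\in\mathbb{C}[x]\}$ span a line; as soon as $\dim E^+_\Lambda\ge 2$ some nontrivial combination $\sum c_{i,j,k}\mathfrak{C}_{i,j,k}$ annihilates them all, and $\Theta_{A_{\Lambda'}}$ has nontrivial kernel. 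No iteration of (\ref{eqn:basisAct}) or (\ref{eqn:ACtionExact}) changes this, because the obstruction is already in which subspace of $E^+_\Lambda$ the moments of $A$ can reach. Injectivity can only hold under a nondegeneracy hypothesis on $A$ (that the classes of the diagonals of the powers of $A$ project onto all of $E^+_\Lambda$), which appears neither in the statement nor in your argument; your instinct that ``nothing purely formal'' forces this is exactly correct.
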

\begin{proof}
Let $\varphi(A)\in P(A)$ be a polynomial in $A$. Then $\varphi(A)$ is a $\Lambda$-equivariant operator of finite range and $\tau_{i,j,k}(\varphi(A))$ is well defined for any $\tau_{i,j,k}\in\mathrm{Tr}_\Lambda^+(\mathcal{A}_\Lambda^{tlc})$. Let $\varphi\in C([-\|A\|-2,\|A\|+2])$ be a continuous function and denote by $\{\varphi_n\}$ a Cauchy sequence of polynomials which converge to $\varphi$ uniformly in $C([-\|A\|-2,\|A\|+2])$ under the supremum norm. By the continuous functional calculus, $\{\tau_{i,j,k}(\varphi_n(A))\}_n$ is a Cauchy sequence, so $\tau_{i,j,k}(\varphi(A)) = \lim_{n\rightarrow \infty} \tau_{i,j,k}(\varphi_n(A))$ is the extension to $\mathrm{Tr}(C(A_{\Lambda'},1))$ of $\mathrm{Tr}_\Lambda^+(\mathcal{A}_\Lambda^{tlc})$.
\end{proof}
Given a self-adjoint operator $A\in \mathcal{S}_\Lambda$ let $J_A\subset \mathbb{R}$ be a closed interval of finite length which contains the spectrum $\sigma(A)$ of $A$ and let $\Lambda'\in\Omega_\Lambda$. For each index $(i,j,k)\in I_\Lambda^+$, by Proposition \ref{prop:injectiveMap}, there exists a unique regular countably additive Borel measure $\rho_{i,j,k}^{A}$ defined by
$$\rho_{i,j,k}^{A}(\varphi) = \Theta_{\Lambda'}(\tau_{i,j,k})(\varphi(A))$$
for any $\varphi\in C(J_A)$. In a slight abuse of notation, we will sometimes denote $\rho_{i,j,k}^{A}(\varphi) =\tau_{i,j,k}(\varphi(A))$ even though we implicitly use the map $\Theta_{\Lambda'}$ to extend the traces.
\begin{proof}[Proof of Theorem \ref{thm:shubin}]
  For a RFT Delone set $\Lambda'\in\Omega_\Lambda$ and a good Lipschitz domain $B_0$, let $A\in \mathcal{B}(\ell^2(\Lambda'))$ be defined by a self-adjoint $A\in \mathcal{S}_\Lambda$, and let $\varphi\in C(J_A)$ be a polynomial. We take the functions $\Psi_{i,j,k}^{B_0}$ to be the same ones as in (\ref{eqn:PsiFuns}). In \cite[Theorem 4.7]{LS:algebras}, it is shown that (see the end of the proof of Theorem 4.7)
  \begin{equation}
    \label{eqn:inOut}
    |\mathrm{tr}(\varphi(A|_{B_T}))  -  \mathrm{tr}(\varphi(A)|_{B_T})| \le C |\partial_{N\cdot R_{a}}B_{T}|,
\end{equation}
where $N$ is the degree of $\varphi$, and $R_{a}$ denotes the range of the kernel corresponding to $A$. We note that the term on the right hand side of (\ref{eqn:inOut}) is $\mathcal{O}(|\partial B_{T}|)$, since we have fixed $\varphi$. Thus, for $T>0$,
\begin{equation}
  \label{eqn:traces2}
  \begin{split}
    &\mathrm{tr}(\varphi(A|_{B_T})) - \sum_{\substack{(i',j',k')\leq (i,j,k) \\ k'\neq k}}\rho_{i',j',k'}^A(\varphi) \Psi_{i',j',k'}^{B_0}(T)L(i',j',T)T^{\frac{\log |\nu_{i'}|}{\log\nu_1}}   \\
    &\hspace{.25in}=\mathrm{tr}(\varphi(A)|_{B_T}) - \sum_{\substack{(i',j',k')\leq (i,j,k) \\ k'\neq k}}\tau_{i',j',k'}(\varphi(A)) \Psi_{i',j',k'}^{B_0}(T)L(i',j',T)T^{\frac{\log |\nu_{i'}|}{\log\nu_1}}  +\mathcal{O}(|\partial B_T|) \\
    &\hspace{.25in}=\mathrm{tr}(\varphi(A)|_{B_T}) - \sum_{\substack{(i',j',k')\leq (i,j,k) \\ k'\neq k}}\mathfrak{C}_{i',j',k'}([f_{\varphi(A)}]) \Psi_{i',j',k'}^{B_0}(T)L(i',j',T)T^{\frac{\log |\nu_{i'}|}{\log\nu_1}} +\mathcal{O}(|\partial B_T|) \\
    &\hspace{.25in}=\mathrm{tr}(\varphi(A)|_{B_T}) - \sum_{\substack{(i',j',k')\leq (i,j,k) \\ k'\neq k}}\alpha_{i',j',k'}([f_{\varphi(A)}]) \Psi_{i',j',k'}^{B_0}(T)L(i',j',T)T^{\frac{\log |\nu_{i'}|}{\log\nu_1}} +\mathcal{O}(|\partial B_T|) \\
    &\hspace{.25in}= \mathfrak{C}_{i,j,k}^{B_0,T}([f_{\varphi(A)}]) + \mathcal{O}(|\partial B_T|).
  \end{split}
\end{equation}
Therefore,
\begin{equation}
  \label{eqn:traces5}
  \begin{split}
    \limsup_{T\rightarrow\infty}&\frac{1}{L(i,j,T)T^{d\frac{\log|\nu_i|}{\log \nu_1}}}\left(  \mathrm{tr}(\varphi(A|_{B_T})) - \sum_{\substack{(i',j',k')\leq (i,j,k) \\ k'\neq k}}\rho_{i',j',k'}^A(\varphi) \Psi_{i',j',k'}^{B_0}(T) L(i',j',T)T^{\frac{\log |\nu_{i'}|}{\log\nu_1}}\right) \\
 &= \limsup_{T\rightarrow\infty}\frac{  \mathfrak{C}_{i,j,k}^{B_0,T}([f_{\varphi(A)}(\star 1)])}{L(i,j,T)T^{d\frac{\log|\nu_i|}{\log \nu_1}}}\\
 &=  \tau_{i,j,k}(\varphi(A)) = \rho_{i,j,k}^A(\varphi),
  \end{split}
\end{equation}
which proves Theorem \ref{thm:shubin}.
\end{proof}
\begin{remark}
\label{remark:allcontinuous}
Suppose now we want to extend the functionals in Theorem \ref{thm:shubin} to all continuous functions given that they are defined for polynomials. Let $h\in C(-\|A\|-2,\|A\| + 2)$ and suppose $\varphi_k\rightarrow h$ is a Cauchy sequence of polynomials. Since the traces are given by (\ref{eqn:traces5}), we can look at
$$\mathfrak{C}_{i,j,k}^{B_0,T}([f_{\varphi_m(A)}]) - \mathfrak{C}_{i,j,k}^{B_0,T}([f_{\varphi_{m+n}(A)}]).$$
Thus, according to (\ref{eqn:functl1}) and (\ref{eqn:main1}), unless the projections satisfy $\alpha_{i',j',k'}([f_{\varphi_m(A)}]) = \alpha_{i',j',k'}([f_{\varphi_{m+n}(A)}])$ for all the right indices, the difference will grow like $\mathrm{Vol}(B_T) \sim T^d$, so averaging by the quantities $L(i,j,T)T^{d\frac{\log|\nu_i|}{\log\nu_1}}$ as in (\ref{eqn:traces5}) will not prevent it from being unbounded.
\end{remark}
\section{An example}
\label{subsec:examples}
As we mentioned in the introduction, the Penrose tiling gives examples of $RFT$ Delone sets with rapidly expanding subspaces of dimension greater than one. Here we will work out an example in one-dimension in full detail.

A one-dimensional substitution is called \textbf{proper} if every substituted letter begins with the same letter and if every substituted letter ends with the same letter. Here is an example on three symbols:
\begin{equation}
  \label{eqn:example}
  A\mapsto ABA,\;\;\;\; B \mapsto ACA,\;\;\;\; C\mapsto ABBCBBCBBCBBA.
\end{equation}
Beyond the combinatorial model given by (\ref{eqn:example}), we can build a geometric model as follows. Pick two letters in the alphabet, $L^-, L^+\in\{A,B,C\}$, and look at the infinite words given by applying the substitution infinitely many times to the seed $L^-.L^+$, expanding the words coming from $L^-$ to the left and to the right with the words coming from $L^+$. Here's an example for $(L^-,L^+) = (A,B)$:
$$A.B\mapsto ABA.ACA \mapsto ABAACAABA.ABAABBCBBCBBCBBAABA \mapsto \cdots .$$
Denote by $w(L^-,L^+)\in \{A,B,C\}^{\bar{\mathbb{Z}}}$, where $\bar{\mathbb{Z}} = \mathbb{Z}-\{0\}$, a resulting infinite word from this procedure, where the dot separating the positive part and negative part of the words is identified to $0$. Now we build a Delone set in $\mathbb{R}$ from $w = w(L^-,L^+)\in\{A,B,C\}^{\bar{\mathbb{Z}}}$. The points of $\Lambda$ are
$$\Lambda = \{0\}\cup\Lambda^+\cup \Lambda^- \hspace{.4in}\mbox{ where }\hspace{.4in}\Lambda^\pm :=  \bigcup_{i\in\mathbb{N}}\sum_{j=\pm1}^{\pm i} \pm\theta_{w_j},$$
and $(\theta_A,\theta_B,\theta_C) = (1,3,13)$, which are the lengths of the tiles in the geometric model. It is straight forward to check that this substitution has expansion 5, and that the tiles given by $\Lambda$ admit the substitution rule corresponding to (\ref{eqn:example}) with expansion 5. Since by \cite[Theorem 6.1]{sadun:book} $H^1(\Omega_\Lambda;\mathbb{R})$ is finite dimensional, $\Lambda$ is a RFT Delone set (here $M_\Lambda = 5$).

In one-dimension, there is an index in $I^+_\Lambda$ for every eigenvalue of norm greater or equal than one, and thus a trace $\tau_{i,j,k}$. By \cite[\S 6]{sadun:book}, the spectrum of the substitution matrix corresponding to a proper one-dimensional substitution is the same as the spectrum of the induced action of cohomology, so the eigenvalues of the induced action on $H^1(\Omega_\Lambda;\mathbb{R})$ are $\omega_1 = 5,\omega_2 = - 2,\mbox{ and } \omega_3 = 2$ while the associated eigenvectors are $v_1 = (1,3,13), v_2 = (1,-4,6), \mbox{ and }v_3 = (-1,0,2)$. Thus, in this example we have that the rapidly expanding subspace $E^+_\Lambda$ is in fact equal to $H^1(\Omega_\Lambda;\mathbb{R})$. Moreover, since the induced action is diagonalizable, we get three traces $\tau_{1,1,1}, \tau_{2,1,1}, \tau_{3,1,1,}$ associated to the eigenvalues $\omega_1,\omega_2, \omega_3$. So simplify notation we will write $\tau_{i} = \tau_{i,1,1}$.

Because $\mathbb{R}$ does not have much geometry, the Delone set $\Lambda$ can be readily seen to be in bijection with $\mathbb{Z}$. Indeed, if we label the points,
$$\Lambda_0 = 0,\hspace{.4in} \Lambda_i = \sum_{j=1}^{ i} \theta_{w_j}\hspace{.4in}\mbox{ for $i>0$ and }\hspace{.4in}  \Lambda_i = \sum_{j=-1}^{ i} -\theta_{w_j} $$
for $i<0$, then we can see the bijective correspondence. Thus, looking at operators in $\mathcal{B}(\ell^2(\Lambda))$ is the same as looking at operators in $\mathcal{B}(\ell^2(\mathbb{Z}))$.

Let us begin looking at the discrete Laplacian $\triangle_\Lambda\in\mathcal{B}(\ell^2(\Lambda))$. For $g\in \ell^2(\Lambda)$, it is given by
$$(\triangle_\Lambda g)_i = \frac{-g_{i-1}+2g_i-g_{i+1}}{2}.$$
For a smooth bump function $u$ with compact support and integral 1, its associated function given by (\ref{eqn:opMap}) is
$$f_{\triangle_\Lambda}(t) = \sum_{p\in\Lambda}\triangle_\Lambda(p,p)u(p-t) = \sum_{p\in\Lambda}u(p-t) = \sum_{p\in\Lambda}u* \delta_p.$$
The function associated to $\Delta_\Lambda$ plays a prominent role in mathematical physics: its autocorrelation function is used to compute the diffraction spectrum of the solid modeled by $\Lambda$ (see \cite[\S 6]{ST:SA}). By the arguments used in the proof of Theorem \ref{thm:main} and the expansion (\ref{eqn:ergExpansion}), we can see that
\begin{equation}
  \label{eqn:ShubEx1}
  \begin{split}
    \mathrm{tr}\left(\triangle_\Lambda|_{[-T,T]}\right) &= \int_{-T}^T f_{\triangle_\Lambda}(t)\, dt + O(1) = |\Lambda\cap [-T,T]|+ O(1) \\
    &= \tau_1(\triangle_\Lambda)T + \tau_2(\triangle_\Lambda)\Psi_2(T)T^{\frac{\log 2}{\log 5}} + \tau_3(\triangle_\Lambda)\Psi_3(T)T^{\frac{\log 2}{\log 5}} + O(1),
  \end{split}
\end{equation}
where $\Psi_2$ and $\Psi_3$ are bounded oscillating functions. Thus, Shubin's formula in this case reads
$$\frac{1}{2T}\mathrm{tr}(\triangle_\Lambda|_{[-T,T]})\longrightarrow \tau_{1}(\Delta_\Lambda) = \mathfrak{m}_\Lambda(\mho_\Lambda),$$
where $\mathfrak{m}_\Lambda(\mho_\Lambda)$ is the asymptotic frequence of $\Lambda$, in this case given by the measure of the canonical transversal $\mho_\Lambda$ by the unique transverse invariant measure $\mathfrak{m}_\Lambda$ to the $\mathbb{R}$ action. So the error of convergence in Shubin's formula is of order $T^{\frac{\log2}{\log 5}-1}$. Thus, although combinatorily all Laplacians on $\ell^2(\Lambda)$ for one-dimensional Delone sets are the same, Shubin's formula does pick up on the geometry of the Delone set and the structure of the pattern space $\Omega_\Lambda$.

For $L\in\{A,B,C\}$, let $N(L,T) = |\Lambda_L\cap [-T,T]|$ be the number of tiles of type $L$ of $\Lambda$ contained in the interval $[-T,T]$. The asymptotic frequency of the tyle of type $L$ is
$$\mathfrak{f}_L = \lim_{T\rightarrow\infty} \frac{N(L,T)}{2T},$$
and note that $\sum_L \mathfrak{f}_L = \mathfrak{m}_\Lambda(\mho_\Lambda) = \frac{5}{21}$ (this will be computed below).

We partition $\Lambda$ now into $\Lambda = \Lambda_A \cup \Lambda_B \cup \Lambda_C$ where $p\in \Lambda$ belongs to $\Lambda_L$ if $p$ is the left endpoint of a tile of type $L$. Let $V_A, V_B, V_C\in \mathcal{B}(\ell^2(\Lambda))$ be the (projection) operators defined for $L\in\{A,B,C\}$ by
$$V_L \delta_p = \left\{ \begin{array}{l} \delta_p \mbox{ if $p\in\Lambda_L$ ,}\\ 0\mbox{ otherwise}  \end{array}\right. \hspace{.7in}\mbox{ so that } \hspace{.7in}\langle V_L \delta_p,\delta_p\rangle  = \left\{ \begin{array}{l} 1 \mbox{ if $p\in\Lambda_L$ ,}\\ 0\mbox{ otherwise.}  \end{array}\right. $$
We can think of the operators $V_L$ as potentials localized on tiles of type $L$ in our material modeled by $\Lambda$. We now consider the three-parameter family of operators
$$H_{a,b,c} = \triangle_\Lambda + aV_A + bV_B + cV_C$$
with $a,b,c\in\mathbb{R}$. Let us look at the operator $H_0 := H_{-\mathfrak{f}_A, -\mathfrak{f}_B, -\mathfrak{f}_C}$.We have
$$f_{H_0}(t) = \sum_{p\in\Lambda}H_0(p,p)u(p-t) = \sum_{L\in\{A,B,C\}}\sum_{p\in \Lambda_L} H_0(p,p)u(p-t) = \sum_{L\in \{A,B,C\}}\sum_{p\in \Lambda_L}(\triangle_\Lambda(p,p)-\mathfrak{f}_L)u(p-t).$$
So we can compute the truncated traces as in (\ref{eqn:ShubEx1}):
\begin{equation}
\label{eqn:ShubEx2}
  \begin{split}
    \mathrm{tr}\left(H_0|_{[-T,T]}\right) &= \int_{-T}^T f_{H_0}(t)\, dt + O(1) = \sum_{L\in\{A,B,C\}}\int_{-T}^T \sum_{p\in\Lambda_L}\triangle_\Lambda(p,p)u(p-t)\, dt-2\mathfrak{f}_LT + O(1)\\
    &= \sum_{L\in\{A,B,C\}} |\Lambda_L\cap [-T,T]|  - 2\mathfrak{f}_L T + O(1) \\
    &= |\Lambda\cap [-T,T]|- 2\mathfrak{m}_\Lambda(\mho_\Lambda)T + O(1)
  \end{split}
\end{equation}
so that Shubin's formula reads
\begin{equation}
  \label{eqn:BadShubin}
\frac{1}{2T}\mathrm{tr}\left(H_0|_{[-T,T]}\right)  \rightarrow \tau_1(H_0) =  \mathfrak{m}_\Lambda(\mho_\Lambda) - \mathfrak{m}_\Lambda(\mho_\Lambda) = 0.
  \end{equation}
Denote by $[\chi_L]$ the cohomology class of the $\Lambda$-equivariant function $f_{V_L} = \sum_{p\in L}u*\delta_p$. The classes $[\chi_A], [\chi_B],[\chi_C]$ form a basis of $H^1(\Omega_\Lambda;\mathbb{R})$ which, in fact, is the standard basis. More precisely, the expressions for $v_1,v_2$ and $v_3$ above are done in terms of this (standard) basis. Having related these two bases for $H^1(\Omega_\Lambda;\mathbb{R})$ we can compute the change of basis maps and conclude that $[\chi_L]$ has non-zero component in both $v_2$ and $v_3$ for all $L$.

Note that $\mathfrak{C}_1([\chi_L]) = \mathfrak{f}_L$ and that $\mathfrak{C}_j(v_i) = 1$ if $i = j$ and $0$ otherwise, where $\mathfrak{C}_i$ are the 3 currents corresponding to the component of the projection of a class in $H^1(\Omega_\Lambda;\mathbb{R})$ to the corresponding eigenspace $v_i$ (see (\ref{eqn:currents2})-(\ref{eqn:traces})). Moreover, through our change of basis map we also conclude that
\begin{equation*}
  \begin{split}
    \mathfrak{C}_1([\chi_L])=  \left\{ \begin{array}{ll} \frac{2}{21} &\mbox{ if $L=A$,}\\ \frac{2}{21}&\mbox{ if $L=B$,} \\ \frac{1}{21}  &\mbox{ if $L=C$,}\end{array}\right.  &\hspace{1in} \mathfrak{C}_2([\chi_L])=  \left\{ \begin{array}{ll} \frac{1}{14} &\mbox{ if $L=A$,}\\ -\frac{5}{28}&\mbox{ if $L=B$,} \\ \frac{1}{28}  &\mbox{ if $L=C$,}\end{array}\right. \\
    \mbox{ and }  \hspace{.35in} \mathfrak{C}_3&([\chi_L]) = \left\{ \begin{array}{ll} -\frac{5}{6} &\mbox{ if $L=A$,}\\ -\frac{1}{12}&\mbox{ if $L=B$,} \\ \frac{1}{12}  &\mbox{ if $L=C$.}\end{array}\right.
    \end{split}
  \end{equation*}
Recall that we have $\tau_i(A) = \mathfrak{C}_i([f_A])$. Using the classes introduced in the previous paragraph, it is straight forward to work out that the class of $f_{H_0}$ decomposes as $[f_{H_0}] = \sum_L [\chi_L] - \mathfrak{f}_Lv_1$. Thus we can apply the traces
$$\tau_i(H_0) = \mathfrak{C}_i([f_{H_0}]) = \sum_L \mathfrak{C}_i([\chi_L]) - \mathfrak{f}_L\mathfrak{C}_i(v_1) ,$$
which shows again that $\tau_1(H_0) = 0$. However, it also shows that $\tau_2(H_0) = -\frac{1}{14} $ and $\tau_3(H_0) = -\frac{5}{6}$, thus justifying the non-trivial expansion as in (\ref{eqn:ShubEx1})
\begin{equation*}
  \begin{split}
    \mathrm{tr}\left(H_0|_{[-T,T]}\right) &= \tau_2(H_0)\Psi_2(T)T^{\frac{\log 2}{\log 5}} + \tau_3(H_0)\Psi_3(T)T^{\frac{\log 2}{\log 5}} + O(1) \\
    &= -\frac{\Psi_2(T)}{14}T^{\frac{\log 2}{\log 5}} - \frac{5\Psi_2(T)}{6}T^{\frac{\log 2}{\log 5}} + O(1),\\
  \end{split}
\end{equation*}
and so we see that even if Shubin's formula gives a zero asymptotic trace in (\ref{eqn:BadShubin}), the magnitude of the truncated traces for $H_0$ still grow, albeit at a slower rate.
\bibliographystyle{amsalpha}
\bibliography{biblio}

\end{document}